\theoremstyle{plain}
\numberwithin{equation}{section}
\newtheorem{thm}{Theorem}[section]
\newtheorem{lem}[thm]{Lemma}
\newtheorem{defn}[thm]{Definition}
\newtheorem{prop}[thm]{Proposition}
\newtheorem{cor}[thm]{Corollary}
\newtheorem{conj}[thm]{Conjecture}
\newtheorem{rmk}[thm]{Remark}
\DeclareMathOperator{\negF}{negF}
\DeclareMathOperator{\Pal}{Pal}
\newcommand{\RR}{\mathbb{R}}
\newcommand{\symm}{\mathfrak{S}}
\newcommand{\seqnum}[1]{\href{http://oeis.org/#1}{\underline{#1}}}
\begin{document}

\title{Further Combinatorics and Applications of Two-Toned Tilings}
\author{Robert Davis}
\address{Department of Mathematics\\
         Colgate University\\
         13 Oak Dr.\\
         Hamilton, NY 13346}
\email{rdavis@colgate.edu}
\author{Greg Simay}
\email{gregsimay@yahoo.com}

\begin{abstract}
	Integer compositions, integer partitions, Fibonacci numbers, and generalizations of these have recently been shown to be interconnected via two-toned tilings of horizontal grids.
	In this article, we present refinements of two-toned tilings, describe functions which analyze them, and apply these to generalizations of integer compositions and partitions which interpolate between the two.
\end{abstract}

\maketitle

\tableofcontents 

\section{Introduction}

An \emph{$n$-tiling} is an arrangement of \emph{tiles} of sizes $1\times 1$ through $1 \times n$ contained inside of a $1 \times n$ grid, covering the grid such that tiles may only intersect along common boundaries.
We say the \emph{length} of a $1 \times k$ tile is $k$.  
When $n$ is understood or unimportant, we may simply call an $n$-tiling a \emph{tiling}.
The tiles may come in various colors; in this article, we consider white and red tiles satisfying certain conditions.
The combinatorics of such tilings were initially explored in \cite{twotoned}, where they were used to determine the number of compositions of an integer with at least or exactly $p$ parts $k$, as well
as general formulas for positively-indexed, generalized Fibonacci numbers.

\begin{defn}
	For nonnegative integers $r$ and $n$, denote by $a(r,n)$ the number of ways to tile a $1 \times (n+r)$ grid using white tiles of any length (whose total length is $n$) and $r$ indistinguishable red squares, i.e. tiles of length $1$.
	Such a tiling is called a \emph{two-toned tiling of length $n+r$}, or simply an \emph{$(n+r)$-tiling} when it is understood that the tiling is two-toned.
\end{defn}

If $n=0$, then $a(r,0)$ corresponds to a tiling using just the indistinguishable red squares, and $a(r,0) = 1$.
If $r=0$, then $a(0,n)$ corresponds to a tiling by just the white tiles of lengths $1$ to $n$, hence $a(0,n)$ is the number of compositions of $n$, i.e. $a(0,n) = 2^{n-1}$ for $n \geq 1$.
Values of $a(r,n)$ for small choices of $r$ and $n$ are displayed in Table~\ref{tab: a(r,n)}, along with known OEIS \cite{oeis} sequences.

\begin{table}
	\[\begin{array}{|c|c|c|c|c|c|c|c|}
		\hline
		$\backslashbox{$r$}{$n$}$ & 0 & 1 & 2 & 3 & 4 & 5 & \text{OEIS sequence }\\  \hline
		0 & 1 & 1 & 2 & 4 & 8 & 16 & \seqnum{A001782} \\ \hline
		1 & 1 & 2 & 5 & 12 & 28 & 64 & \seqnum{A045623} \\ \hline
		2 & 1 & 3 & 9 & 25 & 66 & 168 & \seqnum{A058396} \\ \hline
		3 & 1 & 4 & 14 & 44 & 129 & 360 & \seqnum{A062109} \\ \hline
		4 & 1 & 5 & 20 & 70 & 225 & 681 & \seqnum{A169792} \\ \hline
		5 & 1 & 6 & 27 & 104 & 363 & 1182 & \seqnum{A169793} \\
		\hline
	\end{array}\]
	\caption{The numbers $a(r,n)$ for small values of $r,n$.
			The OEIS entry in row $i$ corresponds to the sequence $\{a(i,n)\}_{n \geq 0}$.}\label{tab: a(r,n)}
\end{table}

Using the red squares can allow one to compute the number of compositions of an integer where there are a specified number of specified parts.
For example, $a(1,n)$ can denote the number of compositions of $n+k$ such that exactly one part is $k$ and all remaining parts are at most $n$.
Red squares can even represent a specified run of integers as long as none of the integers in the run are already represented by one or more of the white tiles.
Indeed, Table~\ref{tab: notation} lists a variety of applications of two-toned tilings and related notation that will be addressed throughout this article.

\begin{table}
\begin{tabular}{|c|l|} \hline
	Notation & Meaning \\ \hline
	$a(r,n)$ & Number of two-toned tilings of length $r+n$ \\
	$a(r,n,k)$ & Number of ways to tile a $1 \times (n+r)$ grid using white tiles of \\
			& \quad lengths $1$ to $k$ (total length $n$) and $r$ indistinguishable red squares \\
	$a_s(r,n)$ & The number of two-toned tilings of length $r+n+s$ \\
	$C_a(r,n)$ & Number of parts of all compositions counted by $a(r,n)$ \\
	$C_b(n,k)$ & Number of compositions of $n$ where all parts $k$ occur consecutively \\
	$C_S(n)$ & Number of compositions of $n$ with parts from $S \subseteq [n]$ \\
	$C(n,\widehat k)$ & Number of compositions of $n$ with no parts $k$ \\
	$C(n,m,\widehat k)$ & Number of $(n+m)$-tilings using white tiles of any length except $k$\\
	$C(n,[k])$ & Number of compositions of $n$ with no parts a multiple of $k$ \\
	$CF(n,k)$ & Number of compositions of $n$ with $k$ frozen parts \\
	$CF(n,[k])$ & Number of compositions of $n$ having multiples of $k$ frozen \\
	$C(n, \langle k_1,\dots,k_m\rangle)$ & Number of compositions of $n$ using only parts $k_1,\dots,k_m$ \\
	$E_p(n,k)$ & Number of compositions of $n$ with exactly $p$ parts $k$ \\ 
	$E_p(n,m,k)$ & Number of compositions of $n$ with parts at most $k$ having exactly\\
			& \quad $p$ parts $m$ \\
	$E(n)$ & Total number of parts over all compositions of $n$\\
	$F(n,k,r)$ & The $r^{th}$ convolution of $\{F(j,k)\}_{j=1}^n$ \\
	$G(n,k)$ & Number of compositions of $n$ with largest part $k$ \\ 
	$G(n,k,r)$ & Number of compositions of $n$ with largest part $k$ appearing \\
			& \quad exactly $r$ times \\
	$L(n,k)$ & Number of compositions of $n$ that have at least one instance of \\
			& \quad $k$ as a part \\
	$L(n,m,k)$ & Number of compositions of $n$ with parts at most $k$ having at least \\
			& \quad one part $m$\\ 
	$L_p(n,m,k)$ &  Number of compositions of $n$ with parts at most $k$ having at least \\
			&\quad $p$ parts $k$\\
	$m(r,n)$ & Number of $(n+r)$-tilings when $r$ red squares combine \\
			&\quad palindromically with the palindromic white tile arrangements\\ 
	$\negF(n,k)$ & The $n^{th}$ negatively-indexed $k$-step Fibonacci number; see \\
			& \quad Definition~\ref{def: negF} \\
	$R(n)$ & Number of runs in all compositions of $n$ \\ 
	$R(n,k)$ & Number of runs of $k$ in all compositions of $n$ \\
	$R(n,k,l)$ & Number of runs of $k$ of length $l$ over all compositions of $n$ \\
	$r(n,\{k\})$ & Number of runs in all compositions of $n$ with largest part $k$ \\ 
	$r(n,j,\{k\})$ & Number of runs of $j$ in all compositions of $n$ having parts at most $k$\\
	$S(n,k)$ & Total parts $k$ over all compositions of $n$ \\ \hline
\end{tabular}
\caption{Collection of notation used throughout the article.}\label{tab: notation}
\end{table}


\section{Identities for $a(r,n)$}

While there are several explicit expressions for $a(r,n)$ given in \cite{twotoned,chinnsimay} the recurrence relation is among the most convenient.
By \cite[Identity 1]{twotoned}, $a(r,n)$ has the recurrence
\[
	a(r,n) = a(r-1,n) + 2a(r,n-1) - a(r-1,n-1)
\]
for $r, n > 1$ with the initial conditions
\[
	a(r,n) = 
		\begin{cases}
			1 & \text{ if } r \geq 0,\,n = 0, \\
			2^{n-1} & \text{ if } r=0,\, n \geq 1.
		\end{cases}
\]
For example, $a(5,5) = a(4,5) + 2a(5,4) - a(4,4) = 1182$. 

From the recurrence for $a(r,n)$, it is a straightforward induction argument to show that for fixed $r \geq 0$, 
\begin{equation}\label{eq:1}
	\sum_{n \geq 0} a(r,n)x^n = \left(\frac{1-x}{1-2x}\right)^{r+1}.\
\end{equation}
We can extend this generating function to consider both $r$ and $n$ by setting
\[
	A(x,y) = \sum_{r \geq 0} \sum_{n \geq 0} a(r,n)x^ry^n.
\]
It quickly follows that
\[
	A(x,y) = \sum_{r \geq 0}  \left(\frac{1-y}{1-2y}\right)^{r+1}x^r =  \frac{1-y}{1-2y-x+xy}.
\]
From these calculations, we see that for fixed $r$, $a(r,n)$ is the $r$-th convolution of the sequence of compositions of $n$.
That is, for $r > 0$, 
\[
	a(r,n) = \sum_{j = 0}^n a(r-1,n-j)a(0,j).
\]
Summations of $a(r,n)$ also have important applications, which in \cite{twotoned} motivated the following definition.

\begin{defn}
	Let $s \geq 0$.
	Denote by $a_s(r,n)$ the number of two-toned tilings of a $1 \times (n+r+s)$ grid with $r$ red squares, with the restriction that the last $s$ tiles must be white.
\end{defn}

\begin{table}
	\[\begin{array}{|c|c|c|c|c|c|c|}
		\hline
		$\backslashbox{$s$}{$n$}$ & 0 & 1 & 2 & 3 & 4\\  \hline
		0 & 1 & 3 & 9 & 25 & 66 \\ \hline
		1 & 1 & 4 & 13 & 38 & 104 \\ \hline
		2 & 1 & 5 & 18 & 56 & 160 \\ \hline
		3 & 1 & 6 & 24 & 80 & 240 \\ \hline
		4 & 1 & 7 & 31 & 111 & 351 \\ \hline
		5 & 1 & 8 & 39 & 150 & 501 \\ \hline
		6 & 1 & 9 & 48 & 198 & 699 \\ \hline
		7 & 1 & 10 & 58 & 256 & 955 \\ \hline
		8 & 1 & 11 & 69 & 325 & 1280 \\
		\hline
	\end{array}\]
	\caption{The numbers $a_s(2,n)$ for small values of $s,n$.}\label{tab: a_s(2,n)}
\end{table}

\begin{table}
	\[\begin{array}{|c|c|c|c|c|c|c|c|} \hline
		a_0(0,n) & a_1(1,n) & a_2(2,n) & a_3(3,n) & a_4(4,n) & a_5(5,n) & a_6(6,n) & a_7(7,n) \\ \hline
		\seqnum{A058396} & \seqnum{A049611} & \seqnum{A001793} & \seqnum{A001788} & \seqnum{A055580} & \seqnum{A055581} & \seqnum{A055582} & \seqnum{A055583} \\ \hline
	\end{array}\]
	\caption{OEIS sequences for $a_r(r,n)$ when $r \leq 7$.}
\end{table}
Note that $a_0(r,n) = a(r,n)$.
Values of $a_s(2,n)$ for small values of $s$ and $n$ are given in Table~\ref{tab: a_s(2,n)}, and OEIS entries for $\{a_r(r,n)\}_{n\geq0}$ for small $r$ are listed in Table~\ref{tab: a_r(r,n)}.
By \cite[Identity 6]{twotoned}, we also have
\[
	a_s(r,n) = \sum_{i = 0}^n a_{s-1}(r,i),
\]
from which
\[
	\sum_{n \geq 0} a_s(r,n)x^n = \frac{1}{(1-x)^s}\left(\frac{1-x}{1-2x}\right)^{r+1}
\]
follows.
If $r = s$, then
\[
	\sum_{n \geq 0} a_r(r,n)x^n = \frac{1-x}{(1-2x)^{r+1}},
\]
from which we can algebraically extract
\[
	a_r(r,n) = 2^{n-1}\left(\binom{n+r}{r} + \binom{n+r-1}{r-1}\right),
\]
which is consistent with \cite[Identity 9]{twotoned}.
If $s= r+1$, then
\[
	\sum_{n \geq 0} a_{r+1}(r,n)x^n = \frac{1}{(1-2x)^{r+1}},
\]
from which we can also algebraically extract
\[
	a_{r+1}(r,n) = 2^n\binom{n+r}{r}.
\]

Now, by \cite[Identity 6]{twotoned}, we may write a new recurrence for $a(r,n)$:
\[
	a(r,n) = a_1(r,n-1) + a(r-1,n).
\]
Using \cite[Identity 8]{twotoned}, we then conclude that
\[
	a_s(r,n) = \sum_{j=0}^n \binom{n-1+s}{j-1+s}\binom{r+j}{r}.
\]
The following conjectured identity has a similar flavor to these identities, but a proof has remained elusive.

\begin{conj}
	For all $s,r, n \geq 1$, we have
	\[
		a_s(r,n) = 2^{n-r-1+s}\sum_{j = 0}^{r+1-s} \binom{r+1-s}{j}\binom{n+r-j}{n}.
	\]
\end{conj}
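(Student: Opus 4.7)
The plan is to derive the conjecture directly from the closed-form generating function
\[
\sum_{n \geq 0} a_s(r,n)x^n = \frac{(1-x)^{r+1-s}}{(1-2x)^{r+1}}
\]
established earlier. Note that the summation in the conjectured formula is empty when $s > r+1$, while $a_s(r,n) > 0$ in that case, so the identity must implicitly assume $s \leq r+1$; I make this restriction throughout.

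The central idea is the algebraic substitution $1-x = \tfrac{1}{2}\bigl(1 + (1-2x)\bigr)$, which re-expresses a power of $(1-x)$ as a polynomial in $(1-2x)$. By the binomial theorem,
\[
(1-x)^{r+1-s} = \frac{1}{2^{r+1-s}} \sum_{j=0}^{r+1-s} \binom{r+1-s}{j} (1-2x)^{r+1-s-j}.
\]
Dividing both sides by $(1-2x)^{r+1}$ breaks the generating function into a sum of simple terms of the form $(1-2x)^{-(s+j)}$. Using the standard series $\frac{1}{(1-2x)^{m}} = \sum_{n\geq 0} \binom{n+m-1}{m-1}2^n x^n$ and reading off the coefficient of $x^n$ yields
\[
a_s(r,n) = 2^{n-r-1+s} \sum_{j=0}^{r+1-s} \binom{r+1-s}{j} \binom{n+s+j-1}{s+j-1}.
\]

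All that remains is cosmetic: replacing the summation index $j$ by $r+1-s-j$ (legitimate because $\binom{r+1-s}{\,\cdot\,}$ is symmetric and the range is preserved) and invoking $\binom{n+r-j}{r-j} = \binom{n+r-j}{n}$ transforms this expression into exactly the form asserted by the conjecture. The entire argument is routine generating-function bookkeeping, so I expect the only real obstacle is spotting the initial identity $1-x = \tfrac{1}{2}(1+(1-2x))$; this is likely the reason the identity has resisted attempts at a direct induction or a combinatorial bijection, since that substitution has no obvious tiling meaning.
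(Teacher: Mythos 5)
There is no proof in the paper to compare against: the statement appears there as an open conjecture (``a proof has remained elusive''), so your argument is not a variant of the paper's approach --- it settles the question, and it is correct. The generating function $\sum_{n \geq 0} a_s(r,n)x^n = (1-x)^{r+1-s}/(1-2x)^{r+1}$ is exactly what the paper records just after introducing $a_s(r,n)$, the substitution $1-x = \tfrac{1}{2}\bigl(1+(1-2x)\bigr)$ with the finite binomial expansion is valid precisely when $r+1-s \geq 0$, the extraction $[x^n](1-2x)^{-(s+j)} = \binom{n+s+j-1}{s+j-1}2^n$ is standard, and the reindexing $j \mapsto r+1-s-j$ sends $\binom{n+s+j-1}{s+j-1}$ to $\binom{n+r-j}{r-j} = \binom{n+r-j}{n}$ as you claim. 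Spot checks against the paper's data confirm the result, e.g.\ for $s=2$, $r=2$, $n=3$ your formula gives $2^{2}\bigl(\binom{1}{0}\binom{5}{3} + \binom{1}{1}\binom{4}{3}\bigr) = 4(10+4) = 56 = a_2(2,3)$, matching Table~\ref{tab: a_s(2,n)}; and at $s=0$ (outside the conjectured range, but where your computation still applies) the extra $j=r+1$ term $\binom{r+1}{r+1}\binom{n-1}{n}$ vanishes for $n \geq 1$, recovering the paper's quoted Identity 5 exactly.

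Your restriction $s \leq r+1$ is not a hedge but a genuine correction to the conjecture as stated. For $s = r+2$ the generating function is $1/\bigl((1-x)(1-2x)^{r+1}\bigr)$, all of whose coefficients are positive --- for instance $a_4(2,1) = 7$ in Table~\ref{tab: a_s(2,n)} --- while the conjectured right-hand side is an empty sum under the standard convention, hence $0$. So the identity as printed is false for $s > r+1$, and any writeup should state the hypothesis $1 \leq s \leq r+1$ explicitly (or adopt a generalized-binomial convention and justify it, which your finite-expansion argument does not cover, since $(1+(1-2x))^{r+1-s}$ has no formal power series expansion in $1-2x$ when the exponent is negative). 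With that amendment in place, your proof is complete.
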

Note that, as an immediate consequence of this conjecture, setting $s = 0$ recovers \cite[Identity 5]{twotoned}, that is,
\[
	a(r,n) = 2^{n-r-1}\sum_{j=0}^r \binom{r+1}{j}\binom{n+r-j}{n}.
\]


\section{Applications}

In this section we will present several applications of $(n+r)$-tilings to compositions of $n$ with various restrictions.
These applications all make use of the function $a_s(r,n)$ in some manner, either explicitly or implicitly. 

\subsection{$k$-step Fibonacci numbers, positively and negatively indexed}

\begin{defn}
	The \emph{$n^{th}$ $k$-step Fibonacci number}, denoted $F(n,k)$, is defined as
	\[
		F(n,k) = 
			\begin{cases}
				0 & \text{ if } n \leq 0,\\
				1 & \text{ if } n = 1,\\
				\sum_{j=1}^k F(n-j,k) & \text{ if } n \geq 2.
			\end{cases}
	\]
\end{defn}

For example, the $3$-step Fibonacci numbers begin $\dots, 0, 0, 1, 1, 2, 4, 7, 13, 24, 44, 81, 149,\dots$.
Of course, setting $k=2$ recovers the usual Fibonacci numbers.
It is easy to verify that $F(j,k) = 2^{j-2}$ for $2 \leq j \leq k$.

Benjamin et. al \cite[Identity 10]{twotoned} gave a combinatorial proof of the following result involving the $k$-step Fibonacci numbers.
Here, we provide a generating function proof.

\begin{thm}\label{thm: thm1}
	For $k \geq 0$ and $n \geq -1$,
	\[
		F(n+1,k)  = \sum_{j \geq 0} (-1)^ja_j(j,n-j(k+1)).
	\]
\end{thm}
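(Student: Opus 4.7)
The plan is to show that both sides of the identity have the same ordinary generating function in $x$ with $k$ held fixed, exploiting the explicit formula
$$\sum_{n \geq 0} a_j(j,n)\, x^n = \frac{1-x}{(1-2x)^{j+1}}$$
recorded earlier in the excerpt.

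First I would compute the generating function of the right-hand side. Treating $a_j(j,m)=0$ for $m<0$, I exchange the order of summation so that the shift by $j(k+1)$ becomes a factor of $x^{j(k+1)}$ multiplying the inner series. The outer sum over $j$ then becomes a geometric series in the variable $-x^{k+1}/(1-2x)$. Summing this geometric series and clearing denominators, I expect the generating function of the right-hand side (indexed by $n$) to collapse to a rational function with numerator $1-x$ and denominator $1-2x+x^{k+1}$.

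Second, I would derive the generating function of the left-hand side directly from the defining $k$-step Fibonacci recurrence. The standard computation gives
$$\sum_{n \geq 0} F(n+1,k)\, x^n = \frac{1}{1 - x - x^2 - \cdots - x^k}.$$
The boundary case $n=-1$ in the theorem contributes the term $F(0,k)=0$, so it can be absorbed silently.

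The final step is to match the two rational functions, which reduces to the polynomial identity $(1-x)(1 - x - x^2 - \cdots - x^k) = 1 - 2x + x^{k+1}$, a short telescoping expansion. The main subtlety, rather than a genuine obstacle, is the justification of the summation swap as a formal power series manipulation: for each fixed $n$, only the indices $j \leq n/(k+1)$ contribute to the coefficient of $x^n$, so the double sum is termwise finite and the exchange is valid in $\mathbb{Z}[[x]]$. The edge case $k=0$ deserves a spot check, as the denominator $1 - x - \cdots - x^k$ degenerates to $1$; both sides then collapse to $1$, consistent with $F(1,0)=1$ and $F(n,0)=0$ for $n \geq 2$.
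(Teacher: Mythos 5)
Your proposal is correct and follows essentially the same route as the paper's own proof: both compute the generating function of the right-hand side using $\sum_{n \geq 0} a_j(j,n)x^n = \frac{1-x}{(1-2x)^{j+1}}$, sum the resulting geometric series in $-x^{k+1}/(1-2x)$ to obtain $\frac{1-x}{1-2x+x^{k+1}}$, and match this with the generating function $\left(1-x-x^2-\cdots-x^k\right)^{-1}$ of $F(n+1,k)$ via the factorization $(1-x)(1-x-\cdots-x^k)=1-2x+x^{k+1}$. Your explicit justification of the summation interchange (coefficientwise finiteness in $\mathbb{Z}[[x]]$) and the spot checks at $n=-1$ and $k=0$ are minor additions the paper leaves implicit, but they do not change the argument.
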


\begin{proof}
	We know that the generating function for $F(n+1,k)$ is $(1-x-x^2-\cdots - x^k)^{-1}$.
	So, we examine the generating function for the right side of the desired equation: by reindexing, we get
	\[\begin{aligned}
		\sum_{m \geq 0} \sum_{j \geq 0} (-1)^ja_j(j,m-j(k+1))x^m =& \sum_{l \geq 0} (-1)^l\frac{1-x}{(1-2x)^{l+1}}x^{l(k+1)} \\
			=& \frac{1-x}{1-2x}\sum_{l \geq 0} \left(\frac{-x^{k+1}}{1-2x}\right)^l \\
			=& \frac{1-x}{1-2x}\left(\frac{1}{1+\frac{x^{k+1}}{1-2x}}\right).
	\end{aligned}\]
	From here, routine elementary algebra simplification shows that the resulting generating function is the same as that of $F(n+1,k)$.
\end{proof}

From the expression for $a_r(r,n)$, Benjamin et al. provided an explicit formula \cite[Identity 9]{twotoned} for $F(n+1,k)$.
For $n,k \geq 1$,
\[
	F(n+1,k) = \sum_{r - 0}^{\lfloor n/(k+1) \rfloor} (-1)^r\binom{n-rk}{r}\frac{n-rk+r}{n-rk}2^{n-rk-r-1}.
\]
Thus, $a_r(r,n)$ has the following recurrence relation.

\begin{prop}
	For $r,n \geq 1$,
	\[
		a_r(r,n) = 2a_r(r,n-1) + a_{r-1}(r-1,n).
	\]
\end{prop}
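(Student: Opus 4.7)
The plan is to leverage the generating function for $a_r(r,n)$ that was derived earlier in the section, namely
\[
f_r(x) := \sum_{n \geq 0} a_r(r,n)\, x^n = \frac{1-x}{(1-2x)^{r+1}}.
\]
The key observation is that this family satisfies a trivial ratio identity: $(1-2x)\, f_r(x) = \dfrac{1-x}{(1-2x)^r} = f_{r-1}(x)$. Expanding the left side as $f_r(x) - 2x\, f_r(x)$ and equating coefficients of $x^n$ for $n \geq 1$ yields $a_r(r,n) - 2a_r(r,n-1) = a_{r-1}(r-1,n)$, which rearranges to the desired recurrence. (At $n=0$ the identity is consistent with $a_r(r,0) = a_{r-1}(r-1,0) = 1$, but the proposition is stated for $n \geq 1$ anyway.)

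The main obstacle is essentially nonexistent here, since once one notices the telescoping relation $(1-2x) f_r(x) = f_{r-1}(x)$, the generating function does all the work. As a sanity check, one could alternatively substitute the explicit formula $a_r(r,n) = 2^{n-1}\bigl(\binom{n+r}{r} + \binom{n+r-1}{r-1}\bigr)$ on both sides and verify the recurrence using Pascal's identity twice; this is routine but longer.

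A combinatorial proof is also conceivable: the left side counts tilings of length $n+2r$ with $r$ red squares whose last $r$ tiles are white, and one would like to split by the length of the final white tile. However, the case in which the final tile has length $1$ forces one to examine the $(r+1)$-th tile from the end, which may be red or white, and sorting out these subcases introduces exactly the kind of bookkeeping that the generating function collapses into a single line. For this reason I would present only the generating-function proof in the paper, perhaps remarking on the combinatorial obstruction.
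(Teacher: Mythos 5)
Your proof is correct and takes essentially the same approach as the paper, which also verifies the identity through the generating function $\sum_{n \geq 0} a_r(r,n)x^n = \frac{1-x}{(1-2x)^{r+1}}$ and ``routine algebraic simplifications.'' Your telescoping observation $(1-2x)f_r(x) = f_{r-1}(x)$ is just a slightly tidier packaging of the same coefficient comparison.
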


\begin{proof}
	We know that the generating function for $a_r(r,n)$ can be written as
	\[
		\frac{(1-x)^{r+1}}{(1-2x)^{r+1}(1-x)^r}.
	\]
	So, the generating function of the right hand side of the desired identity is
	\[
		2x\frac{(1-x)^{r+1}}{(1-2x)^{r+1}(1-x)^r} + \frac{(1-x)^{r}}{(1-2x)^{r}(1-x)^{r-1}}.
	\]
	Routine algebraic simplifications show that this reduces to the generating function for $a_r(r,n)$, as desired.
\end{proof}

Values of $a_r(r,n)$ for small $r,n$ are displayed in Table~\ref{tab: a_r(r,n)}.
Soon we will see that the sequences along the diagonals in the table (an example of which is bolded) arise in an interesting manner.

\begin{rmk}
	We pause here to identify a number of connections between the numbers $a_s(r,n)$ and other results in the literature.
	It appears that the rows of Table~\ref{tab: a_s(2,n)} are related to \emph{$p$-ascent sequences} as defined in \cite{ascentsequences}.
	Namely, $\{a_1(2,n)\}_{n \geq 0}$ appears to coincide with $3$-ascent sequences; see OEIS sequence \seqnum{A049611}.
	The sequence $\{a_4(2,n)\}_{n \geq 0}$ appears to be a Bj\"orner-Welker sequence \cite{bjornerwelker}, providing Betti numbers of certain manifolds.
	The sequence $\{a_6(2,n)\}_{n \geq 0}$ appears to give the \emph{popularity} of the pattern $231$ in permutations of $[n]$; see \cite{popularity} and OEIS sequence \seqnum{A055581}.
	
	In Table~\ref{tab: a_r(r,n)}, $\{a_1(1,n)\}_{n \geq 0}$ arises in \cite[Proposition 41]{countingfunction} as $\binom{1,n}{1,2}$, which counts what they call \emph{$(n+1)$-insets} of a certain set $X$.
	It also appears that $\{a_3(2,n)\}_{n \geq 0}$ of the previous tbale arises as $\binom{0,n}{k,2}$ in the same notation.
	Notably, all of Table~\ref{tab: a_r(r,n)} appears to be exactly the unsigned coefficients Chebyshev polynomials of the first kind; see OEIS sequence \seqnum{A081277} and \cite{chebyshev}.
	We invite the reader to establish bijections between two-toned tilings (and other objects studied within this article) and the objects listed above.
\end{rmk}

\begin{table}
\begin{tabular}{|c|c|c|c|c|c|c|c|} \hline
	\backslashbox{$r$}{$n$} & 0 & 1 & 2 & 3 & 4 & 5 & 6 \\ \hline
	0 & 1 & 1 & 2 & {\bf 4} & 8 & 16 & 32 \\ \hline
	1 & 1 & 3 & {\bf 8} & 20 & 48 & 112 &  \\ \hline
	2 & 1 & {\bf 5} & 18 & 56 & 160 & & \\ \hline
	3 & {\bf 1} & 7 & 32 & 120 & & & \\ \hline
	4 & 1 & 9 & 50 & & & & \\ \hline
	5 & 1 & 10 &  &  &  &  & \\ \hline
	6 & 1 &  &  &  & & &  \\ \hline
\end{tabular}
\caption{Values of $a_r(r,n)$ for small choices of $r$ and $n$.}\label{tab: a_r(r,n)}
\end{table}

We will now consider a subtle but significant variation of the $k$-step Fibonacci numbers.

\begin{defn}\label{def: negF}
	For a positive integer $k$, the \emph{negatively-indexed $n^{th}$ $k$-step Fibonacci numbers} is
	\[
		\negF(n,k) = \negF(n-1,k) + \cdots + \negF(n-k,k)
	\]
	for \emph{any} $n$, using the initial conditions $\negF(n,k) = 0$ for $n = 0,-1,-2,\dots,-(k-2)$ and $\negF(1,k) = 1$ for all $k$.
\end{defn}

So, we have $\negF(n,k) = F(n,k)$ whenever $n$ is nonnegative, but $\negF(n,k)$ may be nonzero for negative values of $n$.
For example, with $k=3$, a portion of the values $\negF(n,3)$ is
\[
	\begin{array}{c|ccccccccccc}
		n & -9 & -8 & -7 & -6 & -5 & -4 & -3 & -2 & -1 & 0 & 1 \\ \hline
		\negF(n,3) & -8 & 4 & 1 & -3 & 2 & 0 & -1 & 1 & 0 & 0 & 1 \\
	\end{array}
\]

\begin{thm}
	For integers $n,k \geq 1$, let $n+1 = km+r$ where $0 \leq r < k$.
	Then
	\[
		\negF(-(n+1),k) = \sum_{j \geq 0} (-1)^{r-jk}a_{r+jk}(r+jk,m-r-j(k+1)).
	\]
\end{thm}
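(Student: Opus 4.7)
The plan is to follow the generating function strategy used in the proof of Theorem~\ref{thm: thm1}. The new feature here is that the decomposition $n+1 = km+r$ forces us to organize the right-hand side by the residue class of $n+1$ modulo $k$, producing an outer finite sum over $r \in \{0,1,\dots,k-1\}$ on top of the inner geometric sum over $j$ that already appeared in Theorem~\ref{thm: thm1}.

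First, I would derive the generating function of $\{\negF(-l,k)\}_{l \geq 0}$. Re-indexing the defining recurrence $\negF(n,k) = \sum_{i=1}^{k} \negF(n-i,k)$ along the negative integers, and using the initial data $\negF(1,k)=1$ together with $\negF(n,k)=0$ for $n \in \{-(k-2),\dots,0\}$ to pin down the first nonzero coefficient, a standard rational-function computation yields
\[
\sum_{l \geq 0} \negF(-l,k)\, y^l \;=\; \frac{y^{k-1}(1-y)}{1 - 2y^k + y^{k+1}}.
\]
A single index shift then produces the generating function of the sequence on the left-hand side of the claimed identity.

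Next, I would build the generating function $\sum_{n} \mathrm{RHS}(n)\, x^n$ of the right-hand side by collecting terms according to the residue $r$. For each fixed $r$, I would substitute $l = m - r - j(k+1)$ and apply
\[
\sum_{l \geq 0} a_s(s,l)\, y^l \;=\; \frac{1-y}{(1-2y)^{s+1}}
\]
with $y = x^k$ and $s = r+jk$. Using $(-1)^{r-jk} = (-1)^{r+jk}$ (since $2jk$ is even), the inner sum over $j$ becomes a geometric series with common ratio $(-1)^k x^{k(k+1)}/(1-2x^k)^k$, and the outer sum over $r \in \{0,1,\dots,k-1\}$ is a finite geometric series with ratio $-x^{k+1}/(1-2x^k)$. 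The factor $(1-2x^k)^k - (-1)^k x^{k(k+1)}$ appears in both a numerator (from the finite geometric sum) and a denominator (from the infinite one) and cancels, leaving a clean rational function in $x$ that can be matched against the generating function produced in the first step.

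The main obstacle is executing this double cancellation cleanly while keeping the signs and index shifts straight: one has to group the $(-1)^{r-jk}$ factor so that the inner series is an honest geometric series in $j$, and one has to account for the boundary contribution from $(m,r)=(0,0)$ (which sits outside the stated range $n \geq 1$) when comparing with $\sum_{n \geq 0}\negF(-(n+1),k)\, x^n$. Once the algebra is carried out and both generating functions are placed side by side, extracting the coefficient of $x^n$ delivers the identity.
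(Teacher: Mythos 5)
Your plan is methodologically identical to the paper's own proof (rational generating functions on both sides, inner geometric series over $j$, outer finite geometric series over the residue $r$), and nearly every computation you describe checks out: the generating function $\sum_{l \geq 0} \negF(-l,k)y^l = y^{k-1}(1-y)/(1-2y^k+y^{k+1})$ is correct, the inner ratio $(-1)^kx^{k(k+1)}/(1-2x^k)^k$ and outer ratio $-x^{k+1}/(1-2x^k)$ are correct, and the factor $(1-2x^k)^k - (-1)^kx^{k(k+1)}$ does cancel between the finite and infinite sums exactly as you predict. The genuine gap is your final claim that the two generating functions can then be "matched." They cannot: carrying out your own computation, the right-hand side (summed over all pairs $m \geq 0$, $0 \leq r < k$, i.e.\ all $n \geq -1$ with $n+1 = km+r$) has generating function
\[
	\frac{1-x^k}{x\left(1-2x^k+x^{k+1}\right)},
\]
while the left-hand side $\sum_n \negF(-(n+1),k)x^n$ has, by your own step one, generating function $x^{k-2}(1-x)/(1-2x^k+x^{k+1})$. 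These agree only when $k=1$, and the $(m,r)=(0,0)$ boundary term you flag (which contributes $x^{-1}$) cannot repair a mismatch that is systematic across all coefficients. Writing $b_i = \negF(1-i,k)$, so that $\sum_i b_ix^i = (1-x^k)/(1-2x^k+x^{k+1})$, the first display above is $x^{-1}\sum_i b_ix^i$, and coefficient extraction yields $\sum_{j \geq 0}(-1)^{r-jk}a_{r+jk}(r+jk,m-r-j(k+1)) = b_{n+1} = \negF(-n,k)$, not $\negF(-(n+1),k) = b_{n+2}$.

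In other words, executing your plan faithfully disproves the theorem as printed and proves it with $\negF(-n,k)$ on the left: the statement carries an off-by-one error. Concretely, for $k=2$, $n=1$ we have $m=1$, $r=0$, and the right side is $a_0(0,1) = 1 = \negF(-1,2)$, whereas the claimed left side is $\negF(-2,2) = -1$; for $k=2$, $n=8$ ($m=4$, $r=1$) the right side is $-a_1(1,3) - a_3(3,0) = -20 - 1 = -21 = \negF(-8,2)$, whereas $\negF(-9,2) = 34$. The paper's proof does not save you here: it computes $\sum_i b_ix^i$ and then asserts the right-hand side has "the same expression" without ever specifying against which power of $x$ the coefficient indexed by $n$ sits, which is precisely where the shift hides (its stated initial condition $b_0 = 0$ even contradicts its own generating function, whose constant term is $\negF(1,k) = 1$). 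So your approach and algebra are sound, but your concluding sentence --- that placing the generating functions side by side and extracting the coefficient of $x^n$ "delivers the identity" --- is a step that fails; an honest writeup must instead correct the statement (replace $\negF(-(n+1),k)$ by $\negF(-n,k)$, equivalently replace the decomposition $n+1 = km+r$ by $n = km + r$ with the left side as printed shifted accordingly) before the coefficient comparison goes through.
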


\begin{proof}
	We approach this problem by defining the sequence $\{b_i\}_{i \geq 0}$ by setting $\negF(n,k) = b_{-n+1}$ and finding the generating function for $\{b_i\}_{i\geq 0}$.
	In this formulation, our new sequence satisfies
	\begin{enumerate}
		\item $b_0 = 0$,
		\item $b_1 = \cdots = b _{k-1} = 0$, and
		\item $b_i = b_{i-k} - \sum_{m=1}^{k-1} b_{i-m}$ for $i \geq k$.
	\end{enumerate}
	Through standard algebraic arguments, we obtain
	\[
		\sum_{i \geq 0} b_ix^i = \frac{1-x^k}{1-2x^k+x^{k+1}}.
	\]
	
	By directly computing the generating function of the right hand side of the desired equality, we obtain the same expression. 
	The claim then follows.
\end{proof}

Note that positively-indexed classical Fibonacci numbers, i.e. when $k=2$, obey the equation
\[
	F(n+1,2) = \sum_{i \geq 0} (-1)^i a_i(i,n-3i),
\]
while negatively-indexed Fibonacci numbers obey the equations
\[
	\negF(-(n+1),2) = \sum_{i \geq 0} a_{2i}(2i,m-3i)
\]
if $n = 2m-1$, and
\[
	\negF(-(n+1),2) = \sum_{i \geq 0} (-1)^{i+1}a_{2i+1}(2i+1,m-(3i+1))
\]
if $n = 2m$.
Putting these together, we see that $\negF(n,2) = F(-n,2)$ when $n < 0$. 
However, as $k$ increases, $F(n,k)$ approaches $2^{n-1}$ for each $n$, while in $\negF(n,k)$ for $n < 0$, the sequences $\{a_{j-i}(j-i,i)\}_{i=0}^j$ appear for each $j \geq 1$, separated by increasingly-large strings of $0$s. 
The strings of consecutive nonzero integers are exactly those that are on the diagonals in Table~\ref{tab: a_r(r,n)}.

\subsection{Convolutions of the $k$-step Fibonacci sequence applied to compositions}

Recall that if $\{s_i\}_{i \geq 0}$ is a sequence of real numbers, its \emph{$r^{th}$ convolution} is the sequence of coefficients of
\[
	\left(\sum_{k \geq 0} s_kx^k\right)^{r+1}
\]
expressed in the standard vector space basis $1,x,x^2,\dots$ over $\RR$.
Let $\{F(n,k,r)\}_{n \geq 0}$ denote the $r$-th convolution of the $k$-step Fibonacci sequence $\{F(j,k)\}_{j=1}^n$, that is, $F(n,k,r)$ is the coefficient of $x^k$ in 
\[
	\left(\sum_{i = 0}^n F(j,k)x^j\right)^{r+1}. 
\]

To illustrate, 
\[\begin{aligned}
	F(n,k,0) &= F(n,k), \\
	F(n,k,1) &= \sum_{j=1}^n F(n+1-j,k)F(j,k), \text{ and } \\
	F(n,k,r) &= \sum_{j=1}^n F(n+1-j,k,r-1)F(j,k,r-1).
\end{aligned}\]

Obtaining expressions for $F(n+1,k,r)$ with $k \geq 1$ motivates the following definition.
\begin{defn}
	For nonnegative integers $r$ and $n$ and $1 \leq k \leq n$, let $a(r,n,k)$ denote the number of ways to tile a $1 \times (n+r)$ grid using white tiles of lengths $1$ to $k$ (with total length $n$) and $r$ indistinguishable red squares.
	Moreover, define
	\[
		a_s(r,n,k) = \sum_{j=0}^n a_{s-1}(r,j,k)
	\]
	where $a_0(r,n,k) = a(r,n,k)$ for all $r,n,k$. 
\end{defn}

Note that if $n=0$, then $a(r,0,k)$ corresponds to a tiling of just the indistinguishable red squares, so that $a(r,0,k) = 1$.
If $r=0$, then $a(0,n,k)$ corresponds to a tiling of just the white tiles of lengths $1$ to $k$, which we know to be $F(n+1,k)$;
note how, as $k \to \infty$, we recover $a(0,n) = C(n)$, the number of compositions of $n$.

Now, from the definition of two-toned tilings, we directly have the following recurrence.

\begin{prop}\label{prop: rec}
	For all $n,k,r \geq 0$, 
	\[
		a(r,n,k) = \sum_{j=1}^k a(r,n-j,k).
	\]
\end{prop}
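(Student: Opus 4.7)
The plan is to establish the recurrence by a first-tile decomposition of the two-toned tilings counted by $a(r,n,k)$. Whenever $n \ge 1$, any such tiling contains at least one white tile, and its leftmost white tile must have some length $j$ with $1 \le j \le k$. I would argue that removing this leftmost white tile and closing up the resulting gap yields a two-toned tiling of a $1 \times (n+r-j)$ grid with $r$ red squares and white tiles of total length $n-j$, i.e., an object counted by $a(r, n-j, k)$. Summing over the admissible values of $j$ would then produce the right-hand side of the claimed identity.

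The crux of the argument is making this decomposition into an actual bijection in the presence of red squares that may precede the leftmost white tile. I would formalize it by recording the number $\rho$ of red squares appearing at the front of the tiling, detaching the leftmost white tile of length $j$, and then describing the remainder as a tiling of $n+r-j$ cells whose first $\rho$ positions still carry red squares. The inverse map would re-insert a white tile of length $j$ immediately after the initial block of $\rho$ red squares. The bulk of the work lies in verifying that this correspondence is well-defined, injective, and surjective onto each set counted by $a(r, n-j, k)$, using the indistinguishability of the red squares to avoid over- or under-counting.

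The hardest step will be handling the boundary situations carefully — in particular when $n$ is small and when $r$ is large relative to $n$, so that long runs of red squares sit at the front of the tiling — and confirming that every tiling counted by $a(r,n,k)$ gets matched to exactly one reduced tiling on the right-hand side under the decomposition. Once the bijection is established and its behavior in these edge cases is checked, the recurrence will follow immediately by summing over $j = 1, \dots, k$.
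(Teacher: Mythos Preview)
Your bijection cannot be made to work, because for fixed $j$ the map ``delete the leftmost white tile'' is not injective from tilings whose first white tile has length $j$ onto the set counted by $a(r,n-j,k)$. Take $k=1$, $n=1$, $r=2$: the three tilings $WRR$, $RWR$, $RRW$ all collapse to the single tiling $RR$ after the white square is removed, so three objects on the left map to one on the right. Recording $\rho$ does not rescue the count: the pair $(\rho,T')$ then ranges over all $T'$ together with any $\rho$ not exceeding the number of leading reds of $T'$, and summing gives $\sum_{T'}\bigl(1+\#\{\text{leading reds of }T'\}\bigr)$ rather than $a(r,n-j,k)$. In particular your inverse, ``insert $W_j$ after the first $\rho$ reds of $T'$,'' is not determined by $T'$ alone.

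This is not a fixable bookkeeping issue: the identity as stated is false for every $r\ge 1$, as the example above already shows ($a(2,1,1)=3$ while $\sum_{j=1}^{1}a(2,1-j,1)=a(2,0,1)=1$). Conditioning on the \emph{last} tile, which may be red, gives instead
\[
a(r,n,k)\;=\;a(r-1,n,k)\;+\;\sum_{j=1}^{k}a(r,n-j,k)\qquad(n+r\ge 1),
\]
and it is this recurrence that yields the generating function $\bigl(\tfrac{1-x}{1-2x+x^{k+1}}\bigr)^{r+1}$. The stated proposition is correct only when $r=0$, in which case your first-tile argument is the standard one; the paper merely asserts the recurrence ``follows directly from the definition,'' so there is no detailed argument to compare against, and the missing $a(r-1,n,k)$ term appears to be an oversight there.
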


Furthermore, we will extend equation \eqref{eq:1}.
By a routine generating function exercise from Proposition~\ref{prop: rec}, we obtain the following.

\begin{prop}\label{prop: f_r}
	For all $r$ and $k$ we have the generating function
	\[
		\sum_{n \geq 0} a(r,n,k)x^n = \left(\frac{1-x}{1-2x+x^{k+1}}\right)^{r+1}.
	\]
\end{prop}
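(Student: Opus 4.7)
The plan is a direct symbolic-method computation. First, introduce an extra formal variable $y$ to mark red squares, and define the bivariate generating function
\[
A_k(x,y) := \sum_{r,n \geq 0} a(r,n,k)\, x^n y^r.
\]
A two-toned tiling is simply a finite sequence of atoms read from left to right, each atom being either a white tile of length $j \in \{1,\dots,k\}$ (of weight $x^j$) or a single red square (of weight $y$); since the $r$ red squares are indistinguishable, different orderings of atoms correspond to different tilings with no overcounting. The standard Sequence construction therefore gives
\[
A_k(x,y) = \frac{1}{1 - (x + x^2 + \cdots + x^k) - y}.
\]

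Second, I would extract the coefficient of $y^r$. Writing $Q(x) := 1 - x - x^2 - \cdots - x^k$ and expanding $(Q(x) - y)^{-1}$ as a geometric series in $y/Q(x)$ yields
\[
A_k(x,y) = \sum_{r \geq 0} \frac{y^r}{Q(x)^{r+1}},
\]
so $\sum_{n \geq 0} a(r,n,k)\, x^n = Q(x)^{-(r+1)}$.

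Third, rewrite $Q(x)$ in the desired form: using the geometric sum $x + x^2 + \cdots + x^k = (x - x^{k+1})/(1-x)$, one finds
\[
Q(x) = \frac{1 - 2x + x^{k+1}}{1 - x},
\]
and substituting produces the claimed expression $\bigl((1-x)/(1-2x+x^{k+1})\bigr)^{r+1}$.

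The only real content is recognizing the sequence-of-atoms structure in the first step; everything else is formal manipulation, which is presumably what the authors mean by a ``routine generating function exercise.'' An essentially equivalent route closer to the paper's suggestion is to use Proposition~\ref{prop: rec} together with the case in which the leftmost tile is a red square (contributing $a(r-1,n,k)$), yielding the functional equation $Q(x) f_r(x) = f_{r-1}(x)$ for $f_r(x) := \sum_n a(r,n,k) x^n$, and then iterate on $r$ starting from the base case $f_0(x) = Q(x)^{-1}$, which is the generating function for $F(n+1,k)$ noted earlier in the section.
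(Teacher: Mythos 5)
Your proof is correct, and it is worth noting that it is \emph{not} simply the paper's argument made explicit: the paper gives no proof at all, only the assertion that the identity follows from Proposition~\ref{prop: rec} by a routine generating-function exercise. Your primary argument instead derives the bivariate series directly from the sequence-of-atoms decomposition, $A_k(x,y) = \bigl(1-(x+x^2+\cdots+x^k)-y\bigr)^{-1}$, extracts $[y^r]$ by the geometric expansion in $y/Q(x)$, and finishes with the algebra $1-x-\cdots-x^k = (1-2x+x^{k+1})/(1-x)$; every step checks out, and the indistinguishability of the red squares does indeed make the atom-sequence bijection exact, with no overcounting. This self-contained route buys something real here, because Proposition~\ref{prop: rec} as printed cannot actually support the ``routine exercise'': the recurrence $a(r,n,k)=\sum_{j=1}^k a(r,n-j,k)$ omits the term accounting for a boundary tile being a red square. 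To see that the printed recurrence is false, take $k=1$, where $a(r,n,1)=\binom{n+r}{r}$ counts arrangements of $n$ white and $r$ red unit squares; the printed recurrence would force $\binom{n+r}{r}=\binom{n+r-1}{r}$. (Alternatively, $a(1,2)=5$ while $a(1,1)+a(1,0)=3$.) Your closing remark silently supplies the correct recurrence $a(r,n,k)=a(r-1,n,k)+\sum_{j=1}^k a(r,n-j,k)$, equivalently the functional equation $Q(x)f_r(x)=f_{r-1}(x)$ with $f_r(x)=\sum_{n\geq 0}a(r,n,k)x^n$, and iterating from $f_0(x)=Q(x)^{-1}$ (the generating function of $F(n+1,k)$) is surely the paper's intended path once this correction is made. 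So either of your two routes is a complete proof, and your second one doubles as a fix for what appears to be a typo in Proposition~\ref{prop: rec}.
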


It is routine to show, for example by \cite[Chapter 2, Rule 3]{generatingfunctionology}, that the generating function $f_{r,k}(n)$ in the proof of Proposition~\ref{prop: f_r} is the same as the generating function for $\{F(n+1,k,r)\}_{n \geq 0}$.
This gives us the following corollary.

\begin{cor}
	For all $n,k,r \geq 0$, we have $a(r,n,k) = F(n+1,k,r)$.
\end{cor}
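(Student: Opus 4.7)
The plan is to match generating functions coefficient-wise. By Proposition~\ref{prop: f_r}, the generating function for $\{a(r,n,k)\}_{n \geq 0}$ is $\left(\frac{1-x}{1-2x+x^{k+1}}\right)^{r+1}$, so it suffices to show that the same power series is the generating function of $\{F(n+1,k,r)\}_{n \geq 0}$.

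First, from the defining recurrence $F(n,k) = \sum_{j=1}^k F(n-j,k)$ together with the initial conditions $F(1,k)=1$ and $F(j,k)=0$ for $j \leq 0$, I would derive by a standard telescoping argument that
\[
\sum_{n \geq 0} F(n+1,k)\,x^n = \frac{1}{1 - x - x^2 - \cdots - x^k}.
\]
Since $F(n+1,k,r)$ is defined as the coefficient of $x^n$ in the $(r+1)$-st power of this series, it follows immediately that
\[
\sum_{n \geq 0} F(n+1,k,r)\,x^n = \left(\frac{1}{1 - x - x^2 - \cdots - x^k}\right)^{r+1}.
\]

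The last step is the algebraic identity
\[
(1-x)\left(1 - x - x^2 - \cdots - x^k\right) = 1 - 2x + x^{k+1},
\]
which is a one-line telescoping check: expanding the product yields $1 - x - x^2 - \cdots - x^k - x + x^2 + \cdots + x^{k+1}$, and all middle terms cancel. Dividing, $\frac{1-x}{1-2x+x^{k+1}} = \frac{1}{1-x-x^2-\cdots-x^k}$, and raising both sides to the $(r+1)$-st power gives the desired equality of generating functions. Comparing coefficients of $x^n$ yields $a(r,n,k) = F(n+1,k,r)$.

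There is no real obstacle here: the only subtlety is keeping the indexing straight (the shift $n \mapsto n+1$ in $F(n+1,k)$ accounts for the extra factor of $1-x$ that appears in the numerator of the generating function for $a(r,n,k)$), and the core computation is the telescoping factorization $1-2x+x^{k+1} = (1-x)(1-x-x^2-\cdots-x^k)$.
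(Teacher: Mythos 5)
Your proof is correct and takes essentially the same approach as the paper, which likewise deduces the corollary by matching the generating function $\left(\frac{1-x}{1-2x+x^{k+1}}\right)^{r+1}$ from Proposition~\ref{prop: f_r} against the generating function of $\{F(n+1,k,r)\}_{n \geq 0}$, citing Wilf for the ``routine'' verification. Your write-up simply makes that routine step explicit via the telescoping factorization $1-2x+x^{k+1} = (1-x)\left(1-x-x^2-\cdots-x^k\right)$, with the indexing handled correctly.
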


Although the recurrence relation for $a(r,n,k)$ given in Proposition~\ref{prop: rec} provides the most efficient method of computation known thus far, $a(r,n,k)$ can also be expressed as an alternating sum of summed $a_s(r,n)$ quantities.

\begin{prop}\label{prop: F = a}
	For all $n,k,r \geq 0$,
	\[
		F(n+1,k,r) = a(r,n,k) = \sum_{j \geq 0} (-1)^j\binom{r+j}{r}a_j(r+j,n-j(k+1)).
	\]
\end{prop}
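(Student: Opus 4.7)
The first equality is already given by the corollary preceding this proposition, so the work is to establish the second equality. My plan is to prove it by computing the generating function in the variable $n$ of the right-hand side and matching it to the generating function for $a(r,n,k)$ furnished by Proposition~\ref{prop: f_r}.

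First I would fix $r$ and $k$ and let
\[
R_{r,k}(x) := \sum_{n \geq 0} x^n \sum_{j \geq 0} (-1)^j \binom{r+j}{r} a_j(r+j, n-j(k+1)).
\]
Interchanging the order of summation (legal because for each $n$ only finitely many $j$ contribute) and shifting the inner index by $j(k+1)$, I get
\[
R_{r,k}(x) = \sum_{j \geq 0} (-1)^j \binom{r+j}{r} x^{j(k+1)} \sum_{m \geq 0} a_j(r+j,m)\, x^m.
\]
The inner generating function is already computed in the discussion preceding the conjecture: specializing $s \mapsto j$ and $r \mapsto r+j$ in
$\sum_{n \geq 0} a_s(r,n)x^n = (1-x)^{r+1-s}/(1-2x)^{r+1}$
gives $(1-x)^{r+1}/(1-2x)^{r+j+1}$.

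The second step is to factor out the $j$-independent part and recognize the remaining sum as a negative binomial series. After pulling out $(1-x)^{r+1}/(1-2x)^{r+1}$, I am left with
\[
\sum_{j \geq 0} \binom{r+j}{r} \left(\frac{-x^{k+1}}{1-2x}\right)^{j},
\]
which by $\sum_{j \geq 0}\binom{r+j}{r}y^j = (1-y)^{-(r+1)}$ equals $\bigl((1-2x)/(1-2x+x^{k+1})\bigr)^{r+1}$. Multiplying everything back together, the two factors of $(1-2x)^{r+1}$ cancel and I obtain
\[
R_{r,k}(x) = \left(\frac{1-x}{1-2x+x^{k+1}}\right)^{r+1},
\]
which is exactly the generating function of $\{a(r,n,k)\}_{n \geq 0}$ from Proposition~\ref{prop: f_r}. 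Comparing coefficients of $x^n$ then gives the claimed identity.

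The only genuinely delicate step is the reindexing and convergence justification in the first display: because $a_j(r+j,m)$ is defined only for $m \geq 0$, I interpret $a_j(r+j, n-j(k+1))$ as zero whenever $n < j(k+1)$, so the outer sum over $j$ is in fact finite for each fixed $n$, and the formal power series manipulations are valid term-by-term. Everything after that is a direct algebraic reduction using the negative binomial identity, with no further subtlety.
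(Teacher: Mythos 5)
Your proposal is correct and follows essentially the same route as the paper's proof: both compute the generating function of the right-hand side term by term via $\sum_{n\geq 0} a_s(r,n)x^n = (1-x)^{r+1-s}/(1-2x)^{r+1}$, factor out $\left(\frac{1-x}{1-2x}\right)^{r+1}$, sum the remaining series in closed form, and match the result $\left(\frac{1-x}{1-2x+x^{k+1}}\right)^{r+1}$ against Proposition~\ref{prop: f_r}. Incidentally, your explicit appeal to the negative binomial identity $\sum_{j\geq 0}\binom{r+j}{r}y^j=(1-y)^{-(r+1)}$ is the right one and in fact cleans up the paper's argument, whose intermediate display writes the coefficient as $\binom{r+1}{i}$ --- an apparent typo for $\binom{r+i}{r}$, since the subsequent simplification there is valid only with the negative-binomial summation you use.
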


\begin{proof}
	Note that the generating function for the right side of the identity is
	\[
		\sum_{n \geq 0} \left(\sum_{j \geq 0} (-1)^j\binom{r+j}{r}a_j(r+j,n-j(k+1))\right)x^n,
	\]
	which simplifies to
	\[
		\sum_{i \geq 0} (-1)^i\binom{r+1}{i}\left(\frac{1-x}{1-2x}\right)^{r+1+i}\frac{x^{i(k+1)}}{(1-x)^i}.
	\]
	Continuing, we get
	\[\begin{aligned}
			\left(\frac{1-x}{1-2x}\right)^{r+1}\sum_{i \geq 0} (-1)^i\binom{r+1}{i}\left(\frac{1-x}{1-2x}\right)^i\frac{x^{i(k+1)}}{(1-x)^i} &= \left(\frac{1-x}{1-2x}\right)^{r+1}\frac{1}{\left(1+\frac{(1-x)x^{k+1}}{(1-2x)(1-x)}\right)^{r+1}} \\
			&= \left(\frac{1-x}{1-2x+x^{k+1}}\right)^{r+1},
	\end{aligned}\]
	This is exactly the generating function for $F(n+1,k,r)$.
\end{proof}

In \cite{twotoned}, the $a(r,n)$ functions were used to compute the number of compositions of $n$ with least or exactly $p$ instances of a given part. 
The convoluted $k$-step Fibonacci sequences allow these quantities to be computed when the parts are at most $k$.

\begin{defn}
	For positive integers $m,n,k$ with $m \leq k$, let $L(n,m,k)$ denote the number of compositions of $n$ with parts at most $k$ such that at least one part is $m$. 
\end{defn}

Table~\ref{tab: F(n+1,3,r)} gives values of $F(n+1,3,r)$ for small choices of $n,r$.
We note that the sequence $\{F(4,3,r)\}_{r \geq 0}$ has multiple existing combinatorial interpretations, described within the OEIS entry \seqnum{A000096}.

\begin{table}
\begin{tabular}{|c|c|c|c|c|c|c|c|c|c|} \hline
	\backslashbox{$j$}{$n$} & 1 & 2 & 3 & 4 & 5 & 6 & 7 & 8 & 9 \\ \hline
	0 & 1 & 1 & 2 & 4 & 7 & 13 & 24 & 44 & 81 \\ \hline
	1 & 1 & 2 & 5 & 12 & 26 & 56 & 118 & 244 &  \\ \hline
	2 & 1 & 3 & 9 & 25 & 63 & 153 & 359 & & \\ \hline
	3 & 1 & 4 & 14 & 44 & 125 & 336 & & & \\ \hline
	4 & 1 & 5 & 20 & 70 & 220 & & & & \\ \hline
	5 & 1 & 6 & 27 & 104 &  &  & & & \\ \hline
	6 & 1 & 7 & 35 &  & & &  & & \\ \hline
	7 & 1 & 8 &  &  & & &  & & \\ \hline
	8 & 1 &  &  &  & & &  & & \\ \hline
\end{tabular}
\caption{Values of $F(n+1,3,r)$ for small choices of $n,r$.}\label{tab: F(n+1,3,r)}
\end{table}

\begin{prop}
	For all $m,n,k$ with $m \leq k$,
	\[
		L(n,m,k) = \sum_{j \geq 1} (-1)^{j-1}F(n+1-jm,k,j).
	\]
\end{prop}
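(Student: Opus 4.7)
The plan is to prove the identity by inclusion–exclusion on the number of distinguished parts equal to $m$, which is the most natural interpretation of the alternating sum.

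First, I would give a combinatorial interpretation of $F(n+1-jm,k,j)$. By the corollary to Proposition~\ref{prop: f_r}, we have $F(n+1-jm,k,j)=a(j,n-jm,k)$, which counts two-toned tilings of a $1\times(n-jm+j)$ grid that use $j$ red squares and white tiles of lengths $1$ through $k$ totaling $n-jm$. I would then inflate each red square into a block of length $m$ labeled as a ``marked $m$.'' Since $m\leq k$, the resulting object is precisely a composition of $n$ with parts at most $k$ together with an ordered choice of $j$ distinguished parts, each equal to $m$. This inflation is a bijection, so $F(n+1-jm,k,j)$ counts such pairs.

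Second, I would count these pairs by the actual number $p$ of parts equal to $m$ in the underlying composition: if the composition has $p$ parts equal to $m$, the number of ways to mark $j$ of them is $\binom{p}{j}$, giving
\[
F(n+1-jm,k,j)\;=\;\sum_{p\geq j}\binom{p}{j}E_{p}(n,m,k).
\]
Substituting into the right-hand side of the claimed identity and interchanging the order of summation yields
\[
\sum_{j\geq 1}(-1)^{j-1}F(n+1-jm,k,j)\;=\;\sum_{p\geq 1}E_{p}(n,m,k)\sum_{j=1}^{p}(-1)^{j-1}\binom{p}{j}.
\]
The inner alternating binomial sum equals $1-(1-1)^{p}=1$ for every $p\geq 1$, so the expression collapses to $\sum_{p\geq 1}E_{p}(n,m,k)=L(n,m,k)$, as desired.

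The main subtlety lies in Step~1, namely verifying that the ``inflation'' of the red squares indeed produces a clean bijection: the marked $m$'s and any unmarked $m$'s sitting inside the white blocks must be distinguishable (they are, because the bijection records \emph{which} copies are marked), and the condition $m\leq k$ is used precisely to ensure the marked blocks are legitimate parts. If instead one preferred a purely analytic proof, an alternative would be to compute the generating function of the right-hand side using Proposition~\ref{prop: f_r}, observe via the identity $(1-x)(1-x-\cdots-x^{k})=1-2x+x^{k+1}$ that $\tfrac{1-x}{1-2x+x^{k+1}}=\tfrac{1}{1-x-\cdots-x^{k}}$, sum the resulting geometric series in $j$, and match the outcome against $\tfrac{x^{m}}{(1-x-\cdots-x^{k})(1-x-\cdots-x^{m-1}-x^{m+1}-\cdots-x^{k})}$, which is the standard generating function for $L(n,m,k)$.
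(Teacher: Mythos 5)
Your proof is correct and takes essentially the same route as the paper's: both identify $F(n+1-jm,k,j)$ with $a(j,n-jm,k)$, inflate the $j$ red squares into (marked) parts equal to $m$ to obtain compositions of $n$ with parts at most $k$, and finish by inclusion--exclusion. Your explicit bookkeeping via $\binom{p}{j}$ and the alternating binomial sum merely spells out the inclusion--exclusion step that the paper leaves implicit, and is carried out correctly.
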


\begin{proof}
	First fix a positive integer $j$ and consider $F(n+1-jm,k,j)$.
	By Proposition~\ref{prop: F = a}, $F(n+1-jm,k,j) = a(j,n-jm,k)$, the right side of which counts the number of $((n-jm)+j)$-tilings whose white tiles have length at most $k$.
	By replacing each red square with a pink tile of length $m$, the resulting tiling has length $(n-jm) + jm  = n$.
	Each of these correspond to a composition of $n$ with parts at most $k$ where at least one part has length $m$. 
	The result follows from applying inclusion-exclusion.
\end{proof}

Next we consider a refinement of $L(n,m,k)$.

\begin{defn}
	For $p,k,n \geq 1$ with $m \leq k$, let $L_p(n,m,k)$ denote the number of compositions of $n$ with parts at most $k$ such that there are at least $p$ parts $m$. 
\end{defn}

With this notation, $L_1(n,m,k) = L(n,m,k)$.
Further, the next proposition follows from the proof of \cite[Identity 12]{twotoned}.

\begin{prop}
For all $p,k,n \geq 1$ with $m \leq k$,
\[
	L_p(n,m,k) = \sum_{j \geq p} (-1)^{j-p}\binom{j-1}{p-1}F(n+1-jm,k,j).
\]
\end{prop}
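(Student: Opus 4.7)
The plan is to prove this by a standard ``at least $p$'' inclusion-exclusion built on top of the bijective interpretation established in the proof of Proposition~\ref{prop: F = a}. First, I would fix $j \ge 1$ and reread the previous proof to get the following identification: by Proposition~\ref{prop: F = a} we have $F(n+1-jm,k,j) = a(j,n-jm,k)$, and the latter counts $(n-jm)+j$-tilings by white tiles of length at most $k$ together with $j$ indistinguishable red squares. Recoloring each red square as a pink $m$-tile produces a composition of $n$ with parts at most $k$ in which $j$ \emph{distinguished} parts equal $m$ (together with a choice of which $j$ parts-equal-to-$m$ are distinguished). Equivalently, if $e_i = e_i(n,m,k)$ denotes the number of compositions of $n$ with parts at most $k$ having exactly $i$ parts equal to $m$, then
\[
	F(n+1-jm,k,j) \;=\; \sum_{i \geq j}\binom{i}{j}e_i,
\]
since a composition contributing to $e_i$ has exactly $\binom{i}{j}$ ways to distinguish $j$ of its $m$-parts.

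Next I would observe that $L_p(n,m,k) = \sum_{i\ge p} e_i$, so the claim reduces to the purely combinatorial-algebraic identity
\[
	\sum_{i \ge p} e_i \;=\; \sum_{j \ge p}(-1)^{j-p}\binom{j-1}{p-1}\sum_{i \ge j}\binom{i}{j}e_i.
\]
Swapping the order of summation on the right, the coefficient of each $e_i$ (for $i \ge p$) becomes $\sum_{j=p}^{i}(-1)^{j-p}\binom{j-1}{p-1}\binom{i}{j}$, and this must equal $1$, while for $i < p$ the sum is empty. To finish I would substitute $s = i - j$ and use the well-known partial-alternating-sum identity
\[
	\sum_{s=0}^{m}(-1)^{s}\binom{i}{s} \;=\; (-1)^{m}\binom{i-1}{m},
\]
which together with $\binom{i-1}{i-p} = \binom{i-1}{p-1}$ collapses the inner sum to $1$ exactly as needed.

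The only real obstacle is bookkeeping: one must make sure the bijection really gives $\binom{i}{j}e_i$ rather than $\binom{i}{j}\cdot(\text{something else})$, since the white tiles outside the pink $m$-tiles can themselves have length $m$ and must be allowed to do so. This is why the marked-subset formulation is important, and why the ``at least $p$'' inclusion-exclusion (with the coefficient $\binom{j-1}{p-1}$) is the correct tool rather than the ordinary ``exactly'' one. Once this bookkeeping is clear, the remainder is the elementary binomial identity above, and the result follows.
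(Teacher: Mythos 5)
Your proposal is correct in substance, but it is worth knowing that the paper does not actually prove this proposition at all: it simply remarks that the statement follows from the proof of Identity 12 of Benjamin--Chinn--Scott--Simay, where the unbounded-parts analogue $L_p(n,k) = \sum_{j \geq p}(-1)^{j-p}\binom{j-1}{p-1}a(j,n-jk)$ is established. Your argument is the natural bounded-parts transcription of that idea, and it is in fact \emph{more} careful than the surrounding text on exactly the right point: since white tiles of length $m$ coexist with the pink (former red) $m$-tiles, the count $F(n+1-jm,k,j) = a(j,n-jm,k)$ from Proposition~\ref{prop: F = a} enumerates compositions with $j$ \emph{marked} parts equal to $m$, i.e. $\sum_{i \geq j}\binom{i}{j}e_i$, rather than compositions with at least $j$ parts $m$; your marked-subset formulation makes the weighted count explicit where the paper's $p=1$ argument merely gestures at inclusion-exclusion. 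The one soft spot is your final step: after substituting $s = i-j$ the summand is the product $\binom{i}{s}\binom{i-s-1}{p-1}$, so the partial-alternating-sum identity does not apply directly as you describe. The needed identity
\[
	\sum_{j=p}^{i}(-1)^{j-p}\binom{j-1}{p-1}\binom{i}{j} = 1 \qquad (i \geq p)
\]
is nonetheless true, and your cited tool does suffice after a small rearrangement: read it in reverse as $\binom{j-1}{p-1} = (-1)^{p-1}\sum_{s=0}^{p-1}(-1)^s\binom{j}{s}$, swap the order of summation, and use $\sum_{j=0}^{i}(-1)^j\binom{i}{j}\binom{j}{s} = (-1)^i\delta_{s,i}$ (which vanishes for $s \leq p-1 < i$, reducing the range $j \geq p$ to $j \leq p-1$, after which the inner sum $\sum_{s=0}^{j}(-1)^s\binom{j}{s} = \delta_{j,0}$ leaves exactly $1$); alternatively, induct on $i$ via Pascal's rule, the base case $i=p$ being immediate. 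With that step firmed up, your proof is complete and, unlike the paper's citation, self-contained.
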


Next, we consider compositions having exactly a particular number of parts.

\begin{defn}
	For $p,m,n,k \geq 1$ and $m \leq k$, let $E_p(n,m,k)$ denote the number of compositions of $n$ with parts at most $k$ having exactly $p$ parts $k$.
\end{defn}

\begin{prop}
For $p,m,n,k \geq $ with $m \leq k$,
\[
	E_p(n,m,k) = \sum_{j \geq p} (-1)^{j-p}\binom{j}{p}F(n+1-jm,k,j).
\]
\end{prop}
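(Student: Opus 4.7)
The plan is to mimic the inclusion-exclusion argument used for $L_p(n,m,k)$, but with the correct binomial weighting so that compositions having more than $p$ parts equal to $m$ cancel out. First, using Proposition~\ref{prop: F = a}, I will rewrite $F(n+1-jm,k,j) = a(j,n-jm,k)$ as the number of $((n-jm)+j)$-tilings with $j$ indistinguishable red squares and white tiles of length at most $k$. Replacing each red square with a ``pink'' tile of length $m$ then gives a bijection between such tilings and pairs consisting of a composition $\sigma$ of $n$ with parts at most $k$, together with an unordered selection of $j$ distinguished parts of $\sigma$, each equal to $m$.

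With this interpretation, I will count the pairs in the second way: a composition $\sigma$ that has exactly $i$ parts equal to $m$ contributes $\binom{i}{j}$ to the total, so
\[
    F(n+1-jm,k,j) \;=\; \sum_{i \geq j} \binom{i}{j}\, E_i(n,m,k).
\]
Next I will apply the standard binomial Möbius inversion (equivalently, iterate $E_p = L_p - L_{p+1}$ together with Pascal's rule), which turns the identity above into
\[
    E_p(n,m,k) \;=\; \sum_{j \geq p} (-1)^{j-p}\binom{j}{p} F(n+1-jm,k,j),
\]
which is the desired formula. Alternatively, the same conclusion can be obtained directly by subtracting the $L_{p+1}$ identity from the $L_p$ identity stated in the preceding proposition and simplifying $\binom{j-1}{p-1} + \binom{j-1}{p} = \binom{j}{p}$; this provides a quick verification path that avoids invoking Möbius inversion.

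The only subtle point — and what I expect to be the main obstacle — is being precise about the bijection in the first step: the $j$ red squares are indistinguishable in the tiling enumeration, so they correspond to an \emph{unordered} subset of $j$ designated parts of $\sigma$, which is exactly what produces the coefficient $\binom{i}{j}$ rather than $j!\binom{i}{j}$ in the double count. Once this is pinned down, the rest reduces to the routine binomial identity already used implicitly in the analogous proof for $L_p(n,m,k)$.
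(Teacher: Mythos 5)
Your argument is correct, and your primary route is genuinely different from the paper's --- in fact, the ``alternative'' you mention in passing \emph{is} the paper's entire proof: the paper simply observes that $E_p(n,m,k) = L_p(n,m,k) - L_{p+1}(n,m,k)$ and applies Pascal's identity $\binom{j-1}{p-1} + \binom{j-1}{p} = \binom{j}{p}$ to the formula for $L_p(n,m,k)$ in the preceding proposition. That two-line derivation is shorter, but it leans on the $L_p$ identity, which the paper itself does not prove in detail (it is attributed to the proof of Identity 12 of Benjamin et al.). Your main route is self-contained: the pink-tile bijection correctly identifies $a(j,n-jm,k) = F(n+1-jm,k,j)$ (via Proposition~\ref{prop: F = a}) with compositions of $n$ having parts at most $k$ (here the hypothesis $m \leq k$ is used, so the pink tiles are legitimate parts) together with an unordered $j$-subset of distinguished parts equal to $m$, yielding the overcount identity $F(n+1-jm,k,j) = \sum_{i \geq j} \binom{i}{j} E_i(n,m,k)$; and you are right that the indistinguishability of the red squares is precisely what produces the coefficient $\binom{i}{j}$ rather than an ordered count. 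Binomial inversion then finishes the proof, and for completeness one can note that both sides vanish once $jm > n$ (since $a(j,n-jm,k) = 0$ for $n - jm < 0$ and $E_i(n,m,k) = 0$ for $im > n$), so the inversion is a finite triangular system requiring no convergence caveat. In short: the paper's subtraction argument buys brevity given its $L_p$ proposition; your direct double-count buys independence from that proposition and makes the combinatorial mechanism explicit --- indeed, summing your $E_i$ formula over $i \geq p$ recovers the $L_p$ formula as well, so your approach could replace the citation rather than depend on it.
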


\begin{proof}
	The equality follows from the observation
	\[
		E_p(n,m,k) = L_p(n,m,k) - L_{p+1}(n,m,k)
	\]
	and from Pascal's identity for binomial coefficients.
\end{proof}

We note that the previous proposition was also implicitly established in the proof of \cite[Identity 13]{twotoned}.

Let $S(n,k)$ denote the total number of parts $k$ over all compositions of $n$.
For example, $S(2,4) = 5$ since there are three compositions of $4$ in which $2$ appears once and one composition in which $2$ appears twice. 

\begin{prop}
	For $1 \leq k < n$, $S(n,k) = 2^{n-2}(n+1)$.
\end{prop}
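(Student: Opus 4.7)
The plan is to observe that the right-hand side $2^{n-2}(n+1)$ does not depend on $k$, so the identity is really a statement about the total number of parts across all compositions of $n$, i.e.\ the quantity $E(n)$ from Table~\ref{tab: notation}. Under this reading, I would prove it by a short generating function calculation.

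First, I would introduce a bivariate generating function for compositions refined by number of parts. Using the standard decomposition of a composition as a nonempty sequence of positive parts,
\[
P(x,t) = \sum_{n \geq 1} \sum_{\pi} t^{\ell(\pi)} x^n = \sum_{m \geq 1}\left(\frac{tx}{1-x}\right)^{m} = \frac{tx}{1-x-tx},
\]
where the inner sum runs over compositions $\pi$ of $n$ and $\ell(\pi)$ denotes the number of parts. Differentiating in $t$ and setting $t=1$ recovers the generating function for the total number of parts across compositions of $n$:
\[
\left.\frac{\partial P}{\partial t}\right|_{t=1} = \frac{x(1-x)}{(1-2x)^2}.
\]

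Next I would extract coefficients using the identity $(1-2x)^{-2} = \sum_{n \geq 0}(n+1)2^n x^n$, giving
\[
\frac{x(1-x)}{(1-2x)^2} = (x-x^2)\sum_{n \geq 0}(n+1)2^n x^n,
\]
so that the coefficient of $x^n$ (for $n \geq 2$) equals $n\cdot 2^{n-1} - (n-1)\cdot 2^{n-2} = 2^{n-2}(n+1)$, matching the claim.

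The main obstacle is interpretive rather than technical: since the raw reading of $S(n,k)$ as the number of occurrences of a specific part $k$ would yield the $k$-dependent value $(n-k+3)2^{n-k-2}$, one must recognize that the statement is instead about the total part count $E(n)$ (equivalently, the sum $\sum_{k \geq 1} S(n,k)$ in the original notation). A combinatorial alternative that avoids generating functions would decompose a (composition, marked part) pair as (prefix of size $a$) $+$ (marked part of length $\ell$) $+$ (suffix of size $b$) with $a+\ell+b=n$ and $a,b\geq 0$, then sum $c(a)c(b)$ with $c(0)=1$ and $c(m)=2^{m-1}$; the same closed form drops out after a short manipulation.
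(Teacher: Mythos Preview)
Your diagnosis is correct: as written the right-hand side is independent of $k$, so the content is the classical identity $E(n)=(n+1)2^{n-2}$ for the total number of parts over all compositions of $n$, and your bivariate generating-function argument (differentiate $P(x,t)=tx/(1-x-tx)$ in $t$, set $t=1$, extract coefficients from $x(1-x)/(1-2x)^2$) is valid and complete.

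The paper takes a genuinely different route, staying inside the two-toned tiling framework rather than appealing to a direct part-marking generating function. It starts from $S(n,k)=\sum_{j\ge 1} j\,E_j(n,k)$, where $E_j(n,k)$ counts compositions of $n$ with exactly $j$ parts equal to $k$, and then feeds in the alternating-sum expression for $E_j(n,k)$ in terms of the tiling numbers $a(j,n-jk)$ (Identity~13 of \cite{twotoned}). Collapsing the resulting double sum yields $S(n,k)=a(1,n-k)=2^{n-k-2}(n-k+3)$, and summing over $k$ gives $\sum_{j=1}^{n} a(1,n-j)=a_1(1,n-1)=2^{n-2}(n+1)$; the paper's printed proof runs these last two steps together, which is exactly the conflation you noticed. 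What the paper's approach buys is that it expresses both $S(n,k)$ and $E(n)$ as specific values of the tiling functions $a(1,\cdot)$ and $a_1(1,\cdot)$, tying the result back to the article's main objects; your approach is shorter, self-contained, and does not require any of that machinery.
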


\begin{proof}
It has been shown \cite{cuisinaire} that
\begin{equation}\label{eq: 4.2-1}
	S(n,k) = 2^{n-k-2}(n-k+3).
\end{equation}
By \cite[Identity 9]{twotoned}, $a_1(1,n) = 2^{n-2}(n+1)$, and hence
\begin{equation}\label{eq: 4.2-2}
	S(n,k) = a(1,n-k).
\end{equation}
With this in mind, \cite[Identity 13]{twotoned} provides a route for establishing \eqref{eq: 4.2-2} and therefore \eqref{eq: 4.2-1}.
The number of times $k$ is a part in compositions having exactly $p$ copies of $k$ is $kE_p(n,k)$.
So,
\[
	S(n,k) = \sum_{j \geq 1} jE_j(n,k).
\]
Applying \cite[Identity 13]{twotoned} to each instance of $E_j(n,k)$, we get 
\[
	S(n,k) = \sum_{j=1}^n a(1,n-j) = a_1(1,n-1) = 2^{n-2}(n+1),
\]
as desired.
\end{proof}

For the next definition, let $\lambda = (\lambda_1,\dots,\lambda_m)$ be an integer composition.
A \emph{run} in $\lambda$ is a subsequence $\lambda_i,\lambda_{i+1},\dots,\lambda_{i+l}$ such that 
\[
	\lambda_{i-1} \neq \lambda_i = \lambda_{i+1} = \cdots = \lambda_{i+l} \neq \lambda_{i+l+1},
\]
using the convention $\lambda_0 = \lambda_{m+1} = 0$. 
The \emph{length} of the run is $l$.
So, for example, $(2,2,2,4,1,1,2)$ has four runs: one of length three, one of length two, and two of length one. 

\begin{defn}
	For positive integers $j,k,n$ with $j \leq k \leq n$, let $r(n,j,\{k\})$ denote the number of runs of $j$, irrespective of length of the run, in all compositions of $n$ whose parts are at most $k$.
	Further, let $r(n,\{k\})$ denote the total number of runs over all compositions of $n$ whose parts are at most $k$.
\end{defn}

\begin{prop}\label{prop: X3}
	For $1 \leq j \leq k \leq n$,
	\[
		r(n,j,\{k\}) = F(n+1-j,k,1) - F(n+1-2j,k,1).
	\]
\end{prop}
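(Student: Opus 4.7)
The plan is to rewrite the right-hand side via the corollary $a(r,n,k) = F(n+1,k,r)$ so the claim becomes
\[
	r(n,j,\{k\}) = a(1,n-j,k) - a(1,n-2j,k),
\]
and then to prove this by interpreting each term as a count of (composition, marked substructure) pairs.

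First I would interpret $a(1,n-j,k)$: such a tiling has length $n-j+1$, consists of white tiles of length at most $k$ together with one red square, and replacing that red square by a white tile of length $j$ yields a reversible bijection with pairs (composition of $n$ with parts $\le k$, distinguished $j$-part). Hence $a(1,n-j,k)$ equals the total number of $j$-parts across all compositions of $n$ with parts at most $k$.

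Next I would handle $a(1,n-2j,k)$ by the analogous trick: replace the red square in an $(n-2j+1)$-tiling by two consecutive $j$-tiles. Crucially, this substitution does not require $2j \le k$, since the inserted object is a pair of separate parts of size $j$, not a single part of size $2j$. The resulting bijection pairs $(n-2j+1)$-tilings with pairs (composition of $n$ with parts $\le k$, distinguished adjacent pair of $j$-parts), so $a(1,n-2j,k)$ counts such adjacent pairs.

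To finish, I would use the elementary observation that in any composition a $j$-run of length $l$ contributes exactly $l$ copies of $j$ and $l-1$ adjacent $jj$-pairs, so summing over the runs of one composition gives (number of adjacent $jj$-pairs) $=$ (total number of $j$-parts) $-$ (number of $j$-runs). Summing this over all compositions of $n$ with parts at most $k$ yields $a(1,n-2j,k) = a(1,n-j,k) - r(n,j,\{k\})$, which rearranges to the desired identity. The main pitfall I expect is conceptual rather than computational: one must resist describing the second substitution as inserting a single $2j$-part, since that would fail when $2j > k$; recognizing the inserted block as two separate adjacent $j$-parts is what makes the argument go through uniformly in $j$ and $k$.
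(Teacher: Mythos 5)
Your proof is correct and follows essentially the same route as the paper's: both interpret $a(1,n-j,k)$ as counting compositions of $n$ with parts at most $k$ together with a distinguished part $j$ (the red square), and both subtract $a(1,n-2j,k)$ as the overcount arising when the distinguished $j$ merely sits inside a longer run. Your version is in fact sharper than the paper's, which splits into the cases $j > n-j$ and $j \le n-j$ and describes the correction term only loosely as tilings that ``merely increase the length of existing runs,'' whereas your identification of $a(1,n-2j,k)$ with distinguished adjacent $jj$-pairs, combined with the observation that a run of length $l$ contributes $l$ parts and $l-1$ adjacent pairs, makes the subtraction precise and uniform in $j$ and $k$.
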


\begin{proof}
	If $j > n-j$, then all runs of $j$ are of length one. 
	The part $j$ may then be represented by a red square in a tiling of $n$, and is combined with white squares of lengths less than $j$.
	The number of such tilings is given by $a(1, n-j, k)$, which is equal to $F(n+1-j, k, 1)$.
	Also note that if $j > n-j$, then $a(1, n-2j, k) = 0$.

	If $j \leq n-j$, then some of the white tiles have length $j$, thereby being otherwise indistinguishable from the red square representing $j$.
	Therefore, some of the $n$-tilings merely increase the length of existing runs.
	The number of such instances is $a(1,n-2j, k) = F(n+1-2j, k, 1)$ since increasing the length of a run has the same effect. 
	Hence, $r(n, j,{k}) = F(n+ 1-j, k,1)-F(n+ 1-2j, k,1)$.
\end{proof}

\begin{prop}
	For all $1 \leq k \leq n$,
	\[
		r(n,\{k\}) = \sum_{j \geq 0} F(n-2j,k,1).
	\]
\end{prop}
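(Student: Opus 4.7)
My plan is to combine Proposition~\ref{prop: X3} with a generating-function argument. Since every run in a composition with parts at most $k$ has value in $\{1, 2, \ldots, k\}$, the total count decomposes as
\[
r(n, \{k\}) \;=\; \sum_{j=1}^{k} r(n, j, \{k\}) \;=\; \sum_{j=1}^{k} \bigl[F(n+1-j, k, 1) - F(n+1-2j, k, 1)\bigr].
\]
The task is then to collapse this alternating double sum to the much simpler form $\sum_{j \geq 0} F(n-2j, k, 1)$.

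To carry this out, I would pass to generating functions. Setting $\phi(x) = (1 - x - x^2 - \cdots - x^k)^{-1}$, the definition of $F(n, k, 1)$ as a convolution of the $k$-step Fibonacci sequence yields $\sum_{m \geq 1} F(m, k, 1)\, x^m = x\phi(x)^2$. Consequently, the proposed right-hand side has generating function
\[
\sum_{n \geq 0} \Bigl(\sum_{j \geq 0} F(n-2j, k, 1)\Bigr) x^n \;=\; \sum_{j \geq 0} x^{2j+1} \phi(x)^2 \;=\; \frac{x\,\phi(x)^2}{1-x^2},
\]
whereas the left-hand sum obtained from Proposition~\ref{prop: X3} translates directly to $\phi(x)^2 \sum_{j=1}^{k} (x^j - x^{2j})$. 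Thus the proposition reduces to verifying equality of these two rational expressions as power series.

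The main obstacle is precisely this algebraic comparison. Writing $\sum_{j=1}^{k} x^j = x(1-x^k)/(1-x)$ and $\sum_{j=1}^{k} x^{2j} = x^2(1-x^{2k})/(1-x^2)$, putting these over a common denominator, and invoking the factorization $1 - x^{2k} = (1-x^k)(1+x^k)$, one collects terms and hopes the residual polynomial corrections collapse. The delicate point is showing that the truncation of the geometric series at $j=k$ contributes only terms of degree strictly greater than the coefficient of interest, so that $\sum_{j=1}^{k}(x^j - x^{2j})$ agrees with $x/(1-x^2)$ in precisely the sense required to extract the coefficient of $x^n$.
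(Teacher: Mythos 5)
Your setup is exactly the paper's: the same decomposition $r(n,\{k\})=\sum_{j=1}^k r(n,j,\{k\})$ followed by Proposition~\ref{prop: X3}, and your generating-function bookkeeping is correct --- the paper's convention $F(n,k,1)=\sum_{j=1}^n F(n+1-j,k)F(j,k)$ (equivalently, $F(n+1,k,r)=a(r,n,k)$ in Proposition~\ref{prop: F = a}, which forces $F(1,k,1)=1$) does give $\sum_{m\ge 1}F(m,k,1)x^m=x\phi(x)^2$, hence the two expressions you wrote down. But the step you defer --- that ``the truncation of the geometric series at $j=k$ contributes only terms of degree strictly greater than the coefficient of interest'' --- is precisely where the argument fails, and it fails irreparably. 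Carrying out your own algebra,
\[
	\sum_{j=1}^{k}\bigl(x^{j}-x^{2j}\bigr)=\frac{x(1-x^{k})(1-x^{k+1})}{1-x^{2}},
\]
so the right-hand generating function exceeds the left-hand one by
\[
	\phi(x)^{2}\cdot\frac{x^{k+1}\bigl(1+x-x^{k+1}\bigr)}{1-x^{2}},
\]
where the second factor is a power series with coefficients in $\{0,1\}$ whose coefficient of $x^{k+1}$ equals $1$. Since $\phi(x)^2$ has strictly positive coefficients, the discrepancy in degree $n$ is at least $[x^{n-k-1}]\phi(x)^2>0$ for every $n\ge k+1$. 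The proposition is asserted for all $n\ge k$, so the corrections land squarely in the range of interest in every case except $n=k$.

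In fact your honest reduction refutes the statement rather than proving it: for $n=3$, $k=2$ the compositions $(2,1)$, $(1,2)$, $(1,1,1)$ contain $2+2+1=5$ runs, while $F(3,2,1)+F(1,2,1)=5+1=6$; for $n=4$, $k=3$ one counts $13$ runs against $F(4,3,1)+F(2,3,1)=12+2=14$. What the cancellation in $\sum_{j=1}^k\bigl[F(n+1-j,k,1)-F(n+1-2j,k,1)\bigr]$ actually leaves is
\[
	r(n,\{k\})=\sum_{\substack{0\le i\le k-1\\ i\ \text{even}}}F(n-i,k,1)\;-\;\sum_{\substack{k\le i\le 2k-1\\ i\ \text{odd}}}F(n-i,k,1),
\]
which agrees with $\sum_{j\ge 0}F(n-2j,k,1)$ exactly when $n\le k$, since only then do all the discarded terms have nonpositive argument and vanish. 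You should know that the paper's own proof takes the identical route and simply asserts that the sum ``simplifies to the desired sum,'' committing the very step you flagged as delicate; so what you were unable to close is a gap in the proposition itself (as stated, under this indexing of $F(\cdot,k,1)$), not a missing trick on your part. A correct conclusion to your argument is the finite alternating formula displayed above, or the restriction of the stated identity to $n\le k$.
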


\begin{proof}
	By definition,
	\[
		r(n,\{k\}) = r(n,1,\{k\}) + \cdots + r(n,k,\{k\}).
	\]
	By Proposition~\ref{prop: X3},
	\[
		r(n,j,\{k\}) = F(n+1-j,k,1) - F(n+1-2j,k,1).
	\]
	Thus,
	\[
		r(n,\{k\}) = \sum_{j \geq 0} (F(n-j,k,1) - F(n-1-2j,k,1),
	\]
	which simplifies to the desired sum.
\end{proof}

\begin{defn}
	Denote by $C(n,\widehat k)$ the number of compositions of $n$ for which $k$ is not a part. 
\end{defn}

From \cite[Theorem 1, Equation (1)]{ChinnHeubach}, we have the following recurrence:
\[
	C(n, \widehat k) = 2C(n-1,\widehat k) + C(n-k-1,\widehat k) - C(n-k,\widehat k).
\]
Consequently, the generating function for $C(n,\widehat k)$ for fixed $k$ is
\[
	\frac{1-x}{1-2x+x^k-x^{k+1}}.
\]

Recall that $C(n)$ denotes the number of compositions of $n$.
Let $L(n,k)$ denote the number of compositions of $n$ that have at least one instance of $k$ as a part.
It follows immediately that
\[
	C(n,\widehat k) = C(n) - L(n,k).
\]
But $C(n) = a(0,n)$ and \cite[Identity 11]{twotoned} gives
\[
	L(n,k) = \sum_{j \geq 1} (-1)^{j-1}a(j,n-jk)
\]
for $n,k \geq 1$.
This leads to the following result directly.

\begin{prop}\label{prop: 4-1}
	For $n,k \geq 1$,
	\[
		C(n,\widehat k) = \sum_{j \geq 0} (-1)^ja(j,n-jk).
	\]
\end{prop}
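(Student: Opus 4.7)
The plan is to prove this directly by substituting the two identities already assembled immediately before the proposition statement, rather than inventing any new combinatorial argument. The identity $C(n,\widehat k) = C(n) - L(n,k)$ holds by complementary counting: every composition of $n$ either contains $k$ as a part or it does not. So the task reduces to expressing each of the two terms on the right as a signed sum of $a(j,n-jk)$ values and merging them.

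First I would write down $C(n) = a(0,n)$, noting that this is the $j=0$ term in the claimed sum $\sum_{j \geq 0}(-1)^j a(j,n-jk)$ (with the convention that $a(0,n) = 2^{n-1}$ for $n \geq 1$, which matches $C(n)$). Next I would invoke the cited identity from \cite{twotoned},
\[
    L(n,k) = \sum_{j \geq 1} (-1)^{j-1} a(j, n-jk),
\]
and observe that $-L(n,k) = \sum_{j \geq 1} (-1)^j a(j, n-jk)$, which accounts for exactly the $j \geq 1$ tail of the claimed expression.

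Combining, $C(n,\widehat k) = a(0,n) + \sum_{j \geq 1}(-1)^j a(j, n-jk) = \sum_{j \geq 0} (-1)^j a(j, n-jk)$, as desired. The only implicit point that needs mentioning is that the sum on the right is finite: $a(j, n-jk)$ vanishes as soon as $n - jk < 0$, so we can harmlessly write $j \geq 0$ rather than imposing an explicit upper bound.

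There is no real obstacle here; the work was done in the two cited identities, and this proposition is just their algebraic consequence. If one wanted a more self-contained proof, one could instead give a direct inclusion-exclusion argument over the set of positions occupied by parts equal to $k$, using two-toned tilings to encode "a composition of $n$ with $j$ distinguished parts $k$" as a tiling counted by $a(j,n-jk)$; but since \cite[Identity 11]{twotoned} already carries out precisely that bookkeeping, routing through $L(n,k)$ is the cleanest path.
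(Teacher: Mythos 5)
Your proof is correct and is essentially identical to the paper's own argument: the paper likewise derives the proposition directly from $C(n,\widehat k) = C(n) - L(n,k)$, the fact $C(n) = a(0,n)$, and \cite[Identity 11]{twotoned}, merging the $j=0$ term with the alternating tail. Your added remark about finiteness of the sum (since $a(j,n-jk)$ vanishes once $n-jk<0$) is a harmless and reasonable clarification.
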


Let $S = \{s_1,\dots,s_m\}$ be a subset of $[n]$, and let $C_S(n)$ denote the number of compositions of $n$ whose unique parts are in $S$. 
Then the generating function for $C_S(n)$ is
\[
	\sum_{n \geq 0} C_S(n)x^n = \frac{1}{1-x^{s_1} - \cdots - x^{s_m}}.
\]
From the above equation, it follows that the generating function for $C(n,\widehat k)$ is
\[
	\frac{1}{1+x^k - \sum_{i \geq 0} x^i}.
\]

\begin{thm}
	Recall that $F(n-1,2)$ is the $(n-1)$-st (classical) Fibonacci number. Then
	\[
		F(n-1,2) = \sum_{j \geq 0} (-1)^ja(j,n-j).
	\]
\end{thm}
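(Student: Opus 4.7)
The plan is to recognize this as the specialization $k=1$ of Proposition~\ref{prop: 4-1}, and then to identify $C(n, \widehat 1)$ with $F(n-1,2)$. Substituting $k=1$ into Proposition~\ref{prop: 4-1} gives
\[
    C(n, \widehat 1) = \sum_{j \geq 0} (-1)^j a(j,n-j),
\]
so the only remaining content is the classical fact that the number of compositions of $n$ whose parts are all at least $2$ is the shifted Fibonacci number $F(n-1,2)$.

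First I would establish that identification via generating functions. A composition of $n$ with no part equal to $1$ is a sequence of parts from $\{2,3,4,\ldots\}$, whose part-generating function is $\sum_{k \geq 2} x^k = x^2/(1-x)$. Therefore
\[
    \sum_{n \geq 0} C(n,\widehat 1) x^n = \frac{1}{1 - \frac{x^2}{1-x}} = \frac{1-x}{1 - x - x^2}.
\]
This agrees with the generating function of $C(n,\widehat k)$ recalled just before Proposition~\ref{prop: 4-1}, evaluated at $k=1$.

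Next I would show that $(1-x)/(1-x-x^2) = \sum_{n \geq 0} F(n-1,2) x^n$. Since $1/(1-x-x^2) = \sum_{n \geq 0} F(n+1,2) x^n$, multiplying by $1-x$ and applying the Fibonacci recurrence $F(n+1,2) - F(n,2) = F(n-1,2)$ termwise gives the claim (the $n=0$ term requires interpreting $F(-1,2)$ as $1$, or else restricting the theorem to $n \geq 1$, where the identity is the sharp content).

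As a sanity check, one could instead verify the generating-function identity directly, without going through Proposition~\ref{prop: 4-1}: using $\sum_{n \geq 0} a(r,n) x^n = ((1-x)/(1-2x))^{r+1}$ from \eqref{eq:1}, the right-hand sum becomes the geometric series
\[
    \sum_{j \geq 0} (-1)^j x^j \left(\frac{1-x}{1-2x}\right)^{j+1} = \frac{1-x}{1-2x} \cdot \frac{1}{1 + \frac{x(1-x)}{1-2x}} = \frac{1-x}{(1-2x) + x(1-x)} = \frac{1-x}{1-x-x^2},
\]
matching the Fibonacci generating function above. There is no real obstacle here; the only subtlety is the index $n=0$ boundary, which I would handle by noting that the theorem is intended for $n \geq 1$ (or by appealing to $\negF$).
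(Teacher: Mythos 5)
Your proposal follows the paper's proof exactly: the paper's entire argument is ``This follows by setting $k=1$ in Proposition~\ref{prop: 4-1}.'' Your additional work---verifying via generating functions that $C(n,\widehat 1)$ has generating function $(1-x)/(1-x-x^2)$ and hence equals $F(n-1,2)$, and flagging that the $n=0$ case requires either restricting to $n\geq 1$ or reading $F(-1,2)$ as $\negF(-1,2)=1$---correctly fills in steps the paper leaves implicit, including a boundary subtlety the paper silently glosses over.
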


\begin{proof}
	This follows by setting $k=1$ in Proposition~\ref{prop: 4-1}.
\end{proof}

Note that if $k > n/2$, then 
\[
	C(n,\widehat k) = a(0,n) - a(1,n-k) = 2^{n-1} - 2^{n-k}(n-k+3).
\]
For $r,n,k \geq 0$, let $C(n,m,\widehat k)$ denote the number of $(n+m)$-tilings using white tiles of any length except $k$.

\begin{thm}
	For $n,m,k \geq 0$, 
	\[\begin{aligned}
		C(n,m,\widehat k) &= E_m(n+mk,k) \\
			&= \sum_{j \geq m} (-1)^{j-m}\binom{j}{m}a(j,n-k(j-m)).
	\end{aligned}\]
\end{thm}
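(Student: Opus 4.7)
The plan is to prove the two equalities separately: first $C(n,m,\widehat{k}) = E_m(n+mk,k)$ by a direct bijection, then the inclusion–exclusion formula by invoking the earlier identities for $E_p(n,k)$.

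For the first equality, I would set up an explicit bijection between $(n+m)$-tilings whose white tiles avoid length $k$ and compositions of $n+mk$ having exactly $m$ parts equal to $k$. Reading a tiling counted by $C(n,m,\widehat{k})$ from left to right, I would replace each of the $m$ red squares with a white tile of length $k$. The resulting arrangement has total length $n+mk$, and because the original tiling contained no white tile of length $k$, the only length-$k$ tiles in the image come from the replaced red squares; hence the associated composition of $n+mk$ has exactly $m$ parts equal to $k$. The map is plainly invertible: given any such composition, mark the $m$ parts equal to $k$ and replace each of them by a red square, which returns a tiling in $C(n,m,\widehat{k})$.

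For the second equality, I would follow the same inclusion–exclusion scheme used in the proof of the three-argument proposition $E_p(n,m,k) = \sum_{j \ge p}(-1)^{j-p}\binom{j}{p}F(n+1-jm,k,j)$. Specifically, let $L_p(n,k)$ count compositions of $n$ with at least $p$ parts equal to $k$; from \cite[Identity 12]{twotoned} we have
\[
L_p(n,k) = \sum_{j \geq p}(-1)^{j-p}\binom{j-1}{p-1}a(j,n-jk).
\]
Since $E_p(n,k) = L_p(n,k) - L_{p+1}(n,k)$, applying Pascal's identity $\binom{j-1}{p-1} + \binom{j-1}{p} = \binom{j}{p}$ collapses the difference into
\[
E_p(n,k) = \sum_{j \geq p}(-1)^{j-p}\binom{j}{p}a(j,n-jk).
\]
Substituting $p = m$ and replacing $n$ by $n + mk$ turns $a(j,n-jk)$ into $a(j,(n+mk)-jk) = a(j,n-k(j-m))$, which matches the stated expression.

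There is not really a substantive obstacle. The one point that must be checked carefully is that the bijection in the first step is well-defined in both directions: the hypothesis that white tiles have length different from $k$ is precisely what ensures the image composition has \emph{exactly} $m$ parts equal to $k$ rather than more, and symmetrically that the pre-image tiling truly lies in $C(n,m,\widehat{k})$. Once this is in hand, the rest is a mechanical application of the previously established formulas for $E_p(n,k)$.
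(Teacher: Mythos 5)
Your proposal is correct and follows essentially the same route as the paper: a red-square-to-part-$k$ replacement bijection gives $C(n,m,\widehat k) = E_m(n+mk,k)$, and the summation follows by substituting $p=m$ and $n+mk$ for $n$ into the known inclusion--exclusion formula for $E_p(n,k)$. Your extra step of rederiving $E_p(n,k) = \sum_{j \geq p}(-1)^{j-p}\binom{j}{p}a(j,n-jk)$ from the $L_p$ identity via Pascal's rule is just an expansion of what the paper cites from \cite[Identity 13]{twotoned}, and your care about the bijection being well-defined (white tiles avoiding $k$ forcing \emph{exactly} $m$ parts $k$) matches the paper's argument.
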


\begin{proof}
	Consider $C(n,m,\widehat k)$. 
	The white tiles correspond to tiles of any length except $k$.
	The red tiles cannot correspond to a nonnegative part already represented by a white tile, but they can correspond to $k$.
	So, there is a bijection between the two-toned tilings of $C(n, m, \widehat{k})$ and the number of compositions of $n+mk$ having exactly $m$ copies of $k$, which is given by $E_m(n+mk,k)$. 
	The summation arises from substituting $m$ for $p$ and $n+mk$ for $n$.
\end{proof}

To illustrate this result, we have the following special cases:
\[\begin{aligned}
	C(n,0,\widehat k) &= a(0,n) - a(1,n-k) + a(2,n-2k) - \cdots \\
	C(n,1,\widehat k) &= a(1,n) - a(2,n-k) + 3a(3,n-2k) - \cdots \\
	C(n+2k,2,\widehat k) &= a(2,n) - 3a(3,n-k) + 6a(4,n-2k) - \cdots
\end{aligned}\]

\subsection{The largest and second-largest parts of compositions of $n$}

Let $G(n,k)$ denote the number of compositions of $n$ having $k$ as the largest part, and let $C(n,\langle 1,\dots,k\rangle)$ denote the number of compositions of $n$ whose parts are at most $k$.
It is clear that 
\[
	C(n,\langle 1,\dots,k\rangle) - C(n,\langle 1,\dots,k-1\rangle) = G(n,k).
\]
Further, by conditioning on the final part, we get
\[
	C(n,\langle 1,\dots,k\rangle) = \sum_{j = 1}^k C(n-j,\langle 1,\dots,k\rangle),
\]
from which it follows that $\sum_{n \geq 0} C(n,\langle 1,\dots,k\rangle)x^n = \frac{1-x}{1-2x+x^{k+1}}$.
Therefore, $C(n,\langle 1,\dots,k\rangle) = F(n+1,k)$, and
\[
	G(n,k) = F(n+1,k) - F(n+1,k-1).
\]
It follows that
\[\begin{aligned}
	\sum_{n \geq 0} G(n,k)x^n &= \frac{x^{k-1}}{(1-x-\cdots -x^k)(1-x-\cdots -x^{k-1})} \\
		&= \frac{x^{k-1}(1-x)^2}{(1-2x+x^{k+1})(1-2x+x^k)}.
\end{aligned}\]
Thus, $G(n+k-1,k)$ is the convolution of $F(n+1,k)$ and $F(n+1,k-1)$.

The next result relates $F(n,k)$ to the $r$th convolutions of $F(n,k-1)$.

\begin{prop}\label{prop: thm 2-1}
	For all $n,k$,
	\[
		F(n,k) = \sum_{j \geq 0} F(n-jk,k-1,j).
	\]
\end{prop}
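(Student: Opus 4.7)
The plan is to attack this identity purely with generating functions, exploiting the fact that Proposition~\ref{prop: f_r} (together with the corollary $a(r,n,k) = F(n+1,k,r)$) gives us a closed form for the generating function of every convolution $F(\,\cdot\,, k-1, j)$. Specifically, for each fixed $j \geq 0$,
\[
\sum_{n \geq 0} F(n+1, k-1, j)\, x^{n} = \left(\frac{1-x}{1-2x+x^{k}}\right)^{j+1},
\]
and $F(m, k-1, j) = 0$ for $m \leq 0$. So $\sum_{m \geq 0} F(m, k-1, j)\, x^{m} = x\left(\frac{1-x}{1-2x+x^{k}}\right)^{j+1}$.

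First I would form the generating function $S(x) = \sum_{n \geq 0} \left(\sum_{j \geq 0} F(n-jk, k-1, j)\right) x^{n}$, swap the order of summation (justified because for fixed $j$ the inner sequence in $n$ is supported on $n \geq jk+1$), and use the substitution $m = n - jk$ to pull out a factor of $x^{jk}$. This turns $S(x)$ into
\[
S(x) = \sum_{j \geq 0} x^{jk} \cdot x\left(\frac{1-x}{1-2x+x^{k}}\right)^{j+1} = \frac{x(1-x)}{1-2x+x^{k}} \sum_{j \geq 0} \left(\frac{x^{k}(1-x)}{1-2x+x^{k}}\right)^{j}.
\]

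Next I would evaluate the geometric series and simplify. The inner sum is $\dfrac{1-2x+x^{k}}{1-2x+x^{k}-x^{k}(1-x)} = \dfrac{1-2x+x^{k}}{1-2x+x^{k+1}}$, so the $(1-2x+x^{k})$ factor cancels and we obtain $S(x) = \dfrac{x(1-x)}{1-2x+x^{k+1}} = \dfrac{x}{1-x-x^{2}-\cdots-x^{k}}$, which is exactly $\sum_{n \geq 0} F(n,k)\, x^{n}$. Matching coefficients of $x^{n}$ finishes the proof.

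There is no real obstacle here beyond bookkeeping; the only subtlety is keeping the index shift straight, since $F(\,\cdot\,, k-1, j)$ has its ordinary generating function shifted by one relative to $a(j, \,\cdot\,, k-1)$, which is where the extra factor of $x$ in front of the geometric series comes from. Once that is tracked correctly, the telescoping of the $1-2x+x^{k}$ terms in numerator and denominator produces the right answer automatically.
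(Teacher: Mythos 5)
Your proposal is correct and follows essentially the same route as the paper: the paper's proof likewise computes the generating function of the right-hand side, recognizing each convolution term as $x^{jk}/(1-x-x^2-\cdots-x^{k-1})^{j+1}$ (the same rational function as your $x^{jk}\left(\frac{1-x}{1-2x+x^{k}}\right)^{j+1}$ up to the shift), sums the geometric series, and collapses it to $\frac{1}{1-x-x^2-\cdots-x^{k}}$. The only difference is that you track the offset between $F(n,k,r)$ and $a(r,n,k)=F(n+1,k,r)$ explicitly via the extra factor of $x$, whereas the paper works with the shifted sequences on both sides so that the factor cancels implicitly.
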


\begin{proof}
	By considering the generating function of the right side, we get
	\[\begin{aligned}
		\sum_{m \geq 0}\left(\sum_{j \geq 0} F(n-jk,k-1,j)\right)x^m &= \sum_{m \geq 0} \frac{x^{mk}}{(1-x-x^2-\cdots-x^{k-1})^{m+1}} \\
			&= \frac{1}{1-x-x^2-\cdots-x^{k-1}}\cdot\frac{1}{1-\frac{x^k}{1-x-x^2-\cdots-x^{k-1}}} \\
			&= \frac{1}{1-x-x^2-\cdots-x^k},
	\end{aligned}\]
	which we know to be the generating function of $F(n,k)$.
\end{proof}

Extending the definition of $G$, let $G(n,k,r)$ denote the number of compositions of $n$ having $k$ as the largest part exactly $r$ times.

\begin{prop}\label{prop: thm 2-2}
	For $n,k,r \geq 0$, we have
	\[
		G(n,k,r) = F(n+1-kr,k-1,r).
	\]
\end{prop}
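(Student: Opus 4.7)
The plan is to prove the identity bijectively, using the combinatorial model of two-toned tilings that has already been identified with the convolved $k$-step Fibonacci numbers. First I would apply the earlier corollary $a(r,n,k) = F(n+1,k,r)$ with $n$ replaced by $n-kr$ and $k$ replaced by $k-1$ to rewrite the target as
\[
F(n+1-kr,\,k-1,\,r) \;=\; a(r,\,n-kr,\,k-1).
\]
By definition, the right-hand side counts two-toned tilings of a $1\times(n-kr+r)$ grid that use $r$ indistinguishable red squares together with white tiles of lengths $1,\ldots,k-1$ whose total length is $n-kr$.

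Next I would construct an explicit bijection between these tilings and the compositions counted by $G(n,k,r)$. Given a tiling, I would replace each red square by a block of length $k$; reading the tile lengths from left to right then yields a composition of $(n-kr) + kr = n$ in which the $r$ tiles coming from the former red squares contribute the value $k$, while every other tile contributes a value at most $k-1$. Hence the largest part of the resulting composition is $k$, appearing exactly $r$ times. The inverse map takes a composition in $G(n,k,r)$, replaces each part equal to $k$ by a red square and each part $j<k$ by a white tile of length $j$, and reads them off in order. Both maps are length- and order-preserving, proving $G(n,k,r) = a(r,n-kr,k-1) = F(n+1-kr,k-1,r)$.

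The argument is essentially formal once the tiling interpretation is in hand, so the only real subtlety, and the point I would be most careful about, is matching the restrictions on the two sides: the condition that white tiles have length at most $k-1$ (so length $k$ is forbidden) is exactly what forces the $r$ red squares to correspond to the \emph{only} parts of size $k$, aligning with the ``exactly $r$ times'' clause in the definition of $G(n,k,r)$. Alternatively, one could give a purely generating-function proof by showing that both sides, as functions of $n$, have generating function $\dfrac{x^{kr}}{(1-x-x^2-\cdots-x^{k-1})^{r+1}}$, but the bijection above is shorter and more transparent.
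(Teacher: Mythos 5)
Your proposal is correct and follows essentially the same route as the paper: the paper's proof also identifies $G(n,k,r)$ with $a(r,n-kr,k-1)$ by letting the $r$ red squares stand for the parts equal to $k$ among white tiles of length at most $k-1$, and then invokes $a(r,n-kr,k-1) = F(n+1-kr,k-1,r)$. Your write-up merely makes the bijection (and the point that forbidding white tiles of length $k$ enforces the ``exactly $r$ times'' condition) more explicit than the paper's one-line argument.
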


\begin{proof}
	The value of $G(n,k,r)$ can be represented as a two-toned tiling by concatenating an $(n-kr)$-tiling using white tiles of length at most $k-1$ with a $kr$-tiling using just red squares.
	The number of ordered tiling arrangements is therefore $a(r,n-kr,k-1)$, which is equal to $F(n+1-kr,k-1,r)$.
\end{proof}

Note that the previous result allows us to state that
\[
	G(n,k) = \sum_{r \geq 1} G(n,k,r).
\]

\subsection{Frozen parts}

\begin{defn}
	Let $1 \leq j \leq n$.
	A \emph{$j$-partially-ordered composition} of $n$ is any equivalence class of compositions of $n$ under the relation $\lambda \sim \mu$ if and only if 
	\[
		(\lambda_1,\dots,\lambda_j,\lambda_{\sigma(j+1)},\dots,\lambda_{\sigma(k)}) = \mu
	\]
	for some permutation $\sigma \in \symm_{\{j+1,\dots,k\}}$.
	The parts $\lambda_1,\dots,\lambda_j$ are called the \emph{frozen} parts of the composition. 
\end{defn}

For example, $(8,7,9,6,4,3,2,1)$ and $(8,7,9,6,1,2,3,4)$ are the same $4$-partially-ordered composition but are distinct $3$-partially-ordered compositions. 
Moreover, it is clear that $j$-partially-ordered compositions could be defined by freezing any $j$ of the parts; we only consider freezing the first $j$ parts as a matter of computational simplicity.
Notice that the $0$-partially-frozen compositions are just the ordinary compositions of $n$, while the $n$-partially-frozen compositions are just the partitions of $n$.

Recall that $CF(n,k)$ denotes the number of compositions of $n$ with part $k$ frozen, and that $C(n,\widehat k)$ denotes the number of compositions of $n$ with no part $k$.

\begin{prop}\label{prop: thm 3-1}
	For all $n$ and $k$,
	\[
		CF(n,k) = \sum_{j \geq 0} C(n-jk,\widehat k).
	\]
\end{prop}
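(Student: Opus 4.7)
The plan is to prove the identity by a direct bijection, since the proposition is essentially a combinatorial decomposition rather than a generating-function identity.

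First I would clarify the interpretation of $CF(n,k)$ consistent with the formula: a composition of $n$ with part $k$ frozen is one in which every occurrence of the part $k$ has been moved to the front of the composition and declared frozen, while the remaining (non-$k$) parts follow in any order (so they form an ordinary composition using no part equal to $k$). Every composition of $n$ gives rise to exactly one such frozen representative, determined by how many parts equal $k$ it contains.

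Next, I would set up the bijection. Given a composition $\lambda$ counted by $CF(n,k)$, let $j \geq 0$ be the number of frozen copies of $k$ at the front of $\lambda$. Stripping these off yields an ordinary composition $\mu$ of $n-jk$; by the freezing convention every $k$ has already been removed, so $\mu$ is counted by $C(n-jk,\widehat k)$. Conversely, for each $j \geq 0$ and each composition $\mu$ counted by $C(n-jk,\widehat k)$, prepending $j$ copies of $k$ to $\mu$ produces a composition of $n$ with part $k$ frozen. These two maps are visibly inverse, and the range of $j$ is automatically bounded since $C(n-jk,\widehat k)=0$ when $jk>n$, with the $j=n/k$ (if $k\mid n$) case contributing the empty trailing composition corresponding to a frozen composition consisting entirely of $k$'s. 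Summing over $j$ then gives the stated identity.

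There is no real obstacle here; the only subtlety is pinning down the correct reading of ``part $k$ frozen'' so that the decomposition is well defined (exactly one frozen representative per composition of $n$), and handling the endpoint cases $j=0$ (no $k$'s appear, giving $C(n,\widehat k)$) and $jk=n$ (all parts are $k$, giving the empty composition counted by $C(0,\widehat k)=1$). A tiny verification, say $n=4,k=2$, would confirm the count: the six frozen compositions $4,\ 3{+}1,\ 1{+}3,\ 1{+}1{+}1{+}1,\ 2{+}1{+}1,\ 2{+}2$ match $C(4,\widehat 2)+C(2,\widehat 2)+C(0,\widehat 2)=4+1+1=6$.
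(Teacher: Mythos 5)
Your proof is correct and takes essentially the same approach as the paper's: both decompose the frozen compositions according to the number $j$ of frozen copies of $k$, observing that the remaining parts form a $k$-avoiding composition of $n-jk$, and sum over $j$. Your write-up merely makes the bijection, the freezing convention, and the boundary cases ($j=0$ and $jk=n$) explicit, which the paper's brief grouping argument leaves implicit.
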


\begin{proof}
	The compositions of $n$ can be grouped into compositions that have no copies of $k$, one copy of $k$, and so forth through $j$ copies of $k$.
	Because the $jk$ are frozen, they can be treated as being appended to the compositions of $n-jk$ having no $k$.
	The result follows.
\end{proof}

Building off of $C(n,\langle 1,\dots,k\rangle)$, given nonnegative integers $n,k_1,\dots,k_m$ with $1 \leq m \leq n$, let $C(n,\langle k_1,\dots,k_m\rangle)$ denote the number of compositions of $n$ consisting only of parts $k_1,\dots,k_m$.
For example, $C(5,\langle 1,2,5\rangle) = 9$ since the compositions are the distinct permutations of $(5)$, $(2,2,1)$, $(2,1,1,1)$, and $(1,1,1,1,1)$.
Note as well that $C(n,\langle 1,\dots,k\rangle) = F(n+1,k)$. 

\begin{prop}\label{prop: thm 3-2}
	For all $n,k$, 
	\[
		CF(n,k) = C(n,\langle1,\dots,k,2k\rangle).
	\]
\end{prop}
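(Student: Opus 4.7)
The plan is to prove this identity by computing the generating function for each side and observing that they coincide. The virtue of this approach is that both sides already have natural generating function descriptions available from results stated earlier in the excerpt, so most of the work is algebraic bookkeeping rather than combinatorial insight.

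First, I would invoke Proposition~\ref{prop: thm 3-1} to rewrite $CF(n,k) = \sum_{j \geq 0} C(n-jk, \widehat{k})$. Multiplying by $x^n$ and summing yields
\[
    \sum_{n \geq 0} CF(n,k) x^n = \left(\sum_{j \geq 0} x^{jk}\right)\left(\sum_{m \geq 0} C(m,\widehat{k}) x^m\right) = \frac{1}{1-x^k} \cdot \frac{1-x}{1-2x+x^k-x^{k+1}},
\]
using the generating function for $C(m,\widehat{k})$ that was derived just after Proposition~\ref{prop: 4-1}.

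Next, the generating function for $C(n,\langle k_1,\dots,k_m\rangle)$ is given in the excerpt as the reciprocal of $1 - x^{k_1} - \cdots - x^{k_m}$, so in our case
\[
    \sum_{n \geq 0} C(n,\langle 1,\dots,k,2k\rangle)\, x^n = \frac{1}{1 - x - x^2 - \cdots - x^k - x^{2k}}.
\]
The proof then reduces to verifying the algebraic identity
\[
    \frac{1-x}{(1-x^k)(1-2x+x^k-x^{k+1})} = \frac{1}{1 - x - x^2 - \cdots - x^k - x^{2k}}.
\]

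To confirm this, I would clear denominators and expand. Multiplying the right denominator by $(1-x)$ collapses the geometric portion $x + x^2 + \cdots + x^k$ into $x(1-x^k)/(1-x)$, producing $(1-x)(1 - x - \cdots - x^k - x^{2k}) = 1 - 2x + x^{k+1} - x^{2k} + x^{2k+1}$. Separately, expanding $(1-x^k)(1-2x+x^k-x^{k+1})$ directly gives exactly the same polynomial $1 - 2x + x^{k+1} - x^{2k} + x^{2k+1}$, since the $\pm x^k$ and $\pm 2x^{k+1}$ contributions cancel cleanly. This matches, and the result follows.

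There is no real obstacle here; the only thing that could trip one up is ensuring the geometric-series manipulation $1 - x - x^2 - \cdots - x^k = 1 - x(1-x^k)/(1-x)$ is applied carefully, since a sign error in either expansion would derail the identification. A bijective proof is also conceivable — intuitively, each $2k$-part in a composition with parts in $\{1,\dots,k,2k\}$ corresponds to adjoining two frozen copies of $k$ to the head — but the generating function approach is cleaner and leverages machinery the paper has already set up.
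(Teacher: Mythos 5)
Your proof is correct and follows essentially the same route as the paper: both multiply the generating function $\frac{1-x}{1-2x+x^k-x^{k+1}}$ for $C(n,\widehat{k})$ by the geometric series $\frac{1}{1-x^k}$ via Proposition~\ref{prop: thm 3-1} and simplify to $\frac{1}{1-x-x^2-\cdots-x^k-x^{2k}}$, the generating function for $C(n,\langle 1,\dots,k,2k\rangle)$. The only difference is that you carry out explicitly the polynomial verification $(1-x^k)(1-2x+x^k-x^{k+1}) = 1-2x+x^{k+1}-x^{2k}+x^{2k+1}$ that the paper dismisses as ``routine algebra,'' and your expansion is accurate.
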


\begin{proof}
	From \cite[Theorem 1]{ChinnHeubach}, the generating function for $C(n,\widehat k)$ is
	\[
		\sum_{n \geq 0} C(n,\widehat k)x^n = \frac{1-x}{1-2x+x^k-x^{k+1}}.
	\]
	By Proposition~\ref{prop: thm 3-1}, we can compare the generating functions and use routine algebra to conclude that
	\[\begin{aligned}
		\sum_{n \geq 0} CF(n,k)x^n &= (1+x^k + x^{2k} + \cdots )\sum_{n \geq 0} C(n,\widehat k)x^n \\
			&= \frac{1}{1-x^k}\cdot\frac{1-x}{1-2x+x^k-x^{k+1}}\\
			&= \frac{1}{1-x-x^2-\cdots-x^k-x^{2k}}.
	\end{aligned}\]
\end{proof}

\begin{prop}\label{prop: thm 3-3}
	For all $n,k$,
	\[
		CF(n,k) = \sum_{j \geq 0} F(n+1-2kj,k,j).
	\]
\end{prop}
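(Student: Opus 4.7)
The plan is to give a generating function proof mirroring the style used in the surrounding results. Since Proposition~\ref{prop: thm 3-2} (more precisely, the middle display in its proof) already identifies
$$\sum_{n \geq 0} CF(n,k) x^n = \frac{1}{1-x-x^2-\cdots-x^k-x^{2k}},$$
it suffices to show that the right-hand side of the claimed identity has the same generating function in $n$.

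First I would assemble the needed generating function for a single convolution. Combining the Corollary ($a(r,n,k) = F(n+1,k,r)$) with Proposition~\ref{prop: f_r}, and noting the algebraic identity $\frac{1-x}{1-2x+x^{k+1}} = \frac{1}{1-x-x^2-\cdots-x^k}$ (which follows from $(1-x)(1+x+\cdots+x^k) = 1-x^{k+1}$), I obtain
$$\sum_{n \geq 0} F(n+1,k,j)\, x^n = \frac{1}{(1-x-x^2-\cdots-x^k)^{j+1}}.$$

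Next I would interchange the order of summation and reindex via $m = n - 2kj$. Using that $F(m+1,k,j) = 0$ for $m \leq -1$ (so that each coefficient on the left receives only finitely many nonzero contributions), this yields
$$\sum_{n \geq 0} \left(\sum_{j \geq 0} F(n+1-2kj, k, j)\right) x^n = \sum_{j \geq 0} x^{2kj} \cdot \frac{1}{(1-x-\cdots-x^k)^{j+1}}.$$
Factoring out one copy of $(1-x-\cdots-x^k)^{-1}$ and summing the geometric series in $\frac{x^{2k}}{1-x-\cdots-x^k}$ collapses this to
$$\frac{1}{1-x-x^2-\cdots-x^k} \cdot \frac{1}{1 - \frac{x^{2k}}{1-x-\cdots-x^k}} \;=\; \frac{1}{1-x-x^2-\cdots-x^k-x^{2k}},$$
matching the generating function of $CF(n,k)$ above. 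Extracting coefficients of $x^n$ completes the proof.

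There is no serious obstacle here: every ingredient has already been developed in the earlier propositions, and the only delicate point is to justify that $F(m,k,j)$ vanishes for $m \leq 0$ so that the double sum is well defined and the reindexing is legitimate. This follows from the fact that $(1-x-\cdots-x^k)^{-(j+1)}$ has no negative-degree terms, forcing $F(m,k,j) = 0$ whenever $m \leq 0$.
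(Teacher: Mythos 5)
Your proof is correct, but it takes a genuinely different route from the paper's. The paper argues combinatorially: starting from Proposition~\ref{prop: thm 3-2}, which identifies $CF(n,k)$ with $C(n,\langle 1,\dots,k,2k\rangle)$, it stratifies these compositions by the number $j$ of parts equal to $2k$, interprets each stratum as a two-toned tiling in which the $j$ red squares play the role of the parts $2k$ and the white tiles of lengths $1$ to $k$ cover the remaining total length $n-2kj$, so that the stratum is counted by $a(j,n-2kj,k)$, and then invokes the corollary $a(r,n,k)=F(n+1,k,r)$ to convert each term (the boundary cases $j=0$ and ``all parts $2k$'' are handled separately). You instead work entirely with generating functions: you take $\sum_{n \geq 0} CF(n,k)x^n = \left(1-x-x^2-\cdots-x^k-x^{2k}\right)^{-1}$ from the proof of Proposition~\ref{prop: thm 3-2}, use Proposition~\ref{prop: f_r} together with $a(r,n,k)=F(n+1,k,r)$ to get $\sum_{n\geq 0}F(n+1,k,j)x^n = \left(1-x-\cdots-x^k\right)^{-(j+1)}$, and collapse the geometric series in $x^{2k}/(1-x-\cdots-x^k)$; the algebra checks out, and your attention to the convention $F(m,k,j)=0$ for $m\leq 0$ is exactly what legitimizes the interchange and reindexing of the double sum. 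The trade-off: your computation is mechanical and self-verifying, and closer in spirit to several other proofs in the paper (e.g., Proposition~\ref{prop: F = a}), while the paper's tiling argument yields strictly more information, namely the refined fact that the individual summand $F(n+1-2kj,k,j)$ counts precisely those compositions in $CF(n,k)$ with exactly $j$ parts equal to $2k$ --- a refinement that the generating-function manipulation does not expose.
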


\begin{proof}
	By Proposition~\ref{prop: thm 3-2}, $CF(n,k)$ consists of three things: compositions having only parts $1$ through $k$; compositions with parts from $1,\dots,k,2k$; and compositions from parts $2k$ only.
	In the first case, there are $F(n+1,k) = F(n+1,k,0)$ compositions.
	The compositions of the second type can be counted by $a(j,n-2kj,k)$, corresponding to a tiling where part $2k$ has the role of the red square.
	The compositions of the third type consist of $j$ copies of $2k$ only, which itself corresponds to $j$ red squares.
	This is counted by $a(j,2kj-2kj,k) = a(j,0,k) = 1$.
	With $a(r,n,k) = F(n+1,k,r)$, the theorem follows. 
\end{proof}

\subsection{Formulas involving the parts making up the compositions of $n$}

The two-toned tiling functions provide a quick way to evaluate what might otherwise be obscure quantities.

\begin{thm}
	Let $1 \leq j \leq n$.
	If each part $j$ used in the compositions of $n$ is replaced by the compositions of $j$, then the total number of resulting compositions is $a_1(2,n-1)$. 
\end{thm}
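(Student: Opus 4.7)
The plan is to prove this bijectively. First, I would recast the left-hand quantity as the number of triples $(\lambda,i,\mu)$, where $\lambda=(\lambda_1,\dots,\lambda_k)$ is a composition of $n$, the index $i$ picks out the part $\lambda_i$ being replaced, and $\mu$ is the composition of $\lambda_i$ chosen as its replacement. On the right, $a_1(2,n-1)$ counts two-toned tilings of a $1\times(n+2)$ grid containing exactly two red squares in which the final tile must be white.

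To match these counts, I would construct the following bijection. From a triple $(\lambda,i,\mu)$ with $\lambda_i=j$, build the tiling by concatenating, from left to right: white tiles of lengths $\lambda_1,\dots,\lambda_{i-1}$ (possibly an empty block), a red square, white tiles of lengths $\lambda_{i+1},\dots,\lambda_k$ (possibly empty as well), a second red square, and finally the white tiles recording $\mu$. The resulting tiling has length $(n-j)+1+1+j=n+2$, contains exactly two red squares, and ends with a white tile because $\mu$ has at least one part (as $j\ge 1$). Conversely, from a tiling in the target set, the two red squares split the white tiles into three blocks $W_1,W_2,W_3$ of total lengths $a,b,c$ with $a+b+c=n$ and $c\ge 1$; I would then recover $\lambda$ by concatenating the parts of $W_1$, the single part $c$, and the parts of $W_2$, set $i$ to be one more than the number of parts of $W_1$ (so that $\lambda_i=c$), and take $\mu=W_3$.

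The only point requiring genuine care — rather than a true obstacle — is that the value $\lambda_i$ of the replaced part is not stored as a separate feature of the tiling but equals the total length $c$ of the final white block $W_3$. The condition that the last tile is white (the $s=1$ restriction defining $a_1$) is precisely what forces $W_3$ to be nonempty, so that $\lambda_i=c\ge 1$ is always a legitimate positive part. Once this is observed, verifying that the two maps are mutually inverse is routine. As a generating-function alternative, one may decompose $\lambda$ at its distinguished part to obtain the series $A(x)^2 B(x)$ with $A(x)=(1-x)/(1-2x)$ and $B(x)=x/(1-2x)$; this simplifies to $x(1-x)^2/(1-2x)^3$, which agrees with $\sum_{n\ge 1}a_1(2,n-1)x^n$ via the formula for $a_s(r,n)$ derived earlier in Section~2.
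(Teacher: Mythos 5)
Your proof is correct, and its main argument takes a genuinely different route from the paper's. The paper argues algebraically: it invokes the earlier fact that part $j$ occurs $a(1,n-j)$ times over all compositions of $n$ (the quantity $S(n,j)$), multiplies by the $a(0,j)=2^{j-1}$ compositions of $j$, and identifies the generating function of the convolution $\sum_{j=1}^{n}a(1,n-j)a(0,j)$ with that of $a_1(2,n-1)$ --- essentially your ``alternative'' computation $A(x)^2B(x)=x(1-x)^2/(1-2x)^3$, except that your bookkeeping is tighter: the paper's displayed series $\frac{1}{1-x}\bigl(\frac{1-x}{1-2x}\bigr)^3$ is missing the factor of $x$ that your $B(x)=x/(1-2x)$ supplies explicitly, and that factor is precisely what produces the shift to $n-1$. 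Your primary, bijective proof has no counterpart in the paper and is more informative: encoding a triple $(\lambda,i,\mu)$ as $W_1$, red square, $W_2$, red square, $W_3=\mu$ explains structurally why an $a_1$-type quantity appears, since the $s=1$ condition (final tile white) is exactly the positivity $\lambda_i=|W_3|\geq 1$ of the replaced part; and because the two indistinguishable red squares are recovered in left-to-right order, the inverse map is well defined even when $W_1$ or $W_2$ is empty or the red squares are adjacent (a quick check at $n=2$ gives the four tilings matching $a_1(2,1)=4$). The one point worth stating explicitly in a final write-up, which you and the paper both use implicitly, is that ``total number of resulting compositions'' is counted with multiplicity --- distinct triples can yield equal compositions, e.g.\ $\lambda=(1,1)$ with $i=1$ or $i=2$ and $\mu=(1)$ --- so your recasting as triples is the faithful reading of the statement.
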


\begin{proof}
	The number of times $j$ is used in the compositions of $n$ is $a(1,n-j)$.
	If the compositions of $j$ are substituted for $j$ then the resulting number of compositions is $2^{j-1}a(1,n-j)$, which is equal to $a(1,n-j)a(0,j)$.
	The total number of these compositions is 
	\[
		\sum_{j = 1}^n a_1(1,n-j)a(0,j).
	\]
	The corresponding generating function is 
	\[
		\left(\frac{1}{1-x}\right)\left(\frac{1-x}{1-2x}\right)^2\left(\frac{1-x}{1-2x}\right) = \frac{1}{1-x}\left(\frac{1-x}{1-2x}\right)^3,
	\]
	which corresponds to $a_1(2,n-1)$.
\end{proof}

\begin{prop}
	Let $1 \leq j \leq n$.
	If each part $j$ used in the compositions of $n$ is replaced by the parts used in the compositions of $j$, then the number of resulting parts is $a_1(3,n-1)$. 
\end{prop}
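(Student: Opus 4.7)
The plan is to express the count as a convolution of two quantities we already understand, and then match generating functions with $a_1(3,n-1)$, mirroring the structure of the preceding theorem.

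First I would break the count according to the value $j$ of the part being replaced. For each $1 \leq j \leq n$, the number of times $j$ occurs as a part across all compositions of $n$ is $S(n,j) = a(1,n-j)$, as already used in Section 3.2. Each such occurrence is then replaced by the parts used in the compositions of $j$, contributing $P(j)$ parts, where $P(j)$ denotes the total number of parts across all compositions of $j$. So the quantity to compute is
\[
T(n) \;=\; \sum_{j=1}^n a(1,n-j)\,P(j).
\]

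Next I would rewrite $P(j)$ in terms of the $a_s$ functions. Summing over the possible values of the part gives
\[
P(j) \;=\; \sum_{k=1}^j S(j,k) \;=\; \sum_{k=1}^j a(1,j-k) \;=\; \sum_{i=0}^{j-1} a(1,i) \;=\; a_1(1,j-1),
\]
where the final equality is the defining relation $a_s(r,n) = \sum_{i=0}^n a_{s-1}(r,i)$. Hence $T(n) = \sum_{j=1}^n a(1,n-j)\,a_1(1,j-1)$, which is a convolution.

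Finally, I would pass to generating functions. From \eqref{eq:1} and the formula $\sum_n a_s(r,n)x^n = (1-x)^{-s}\bigl((1-x)/(1-2x)\bigr)^{r+1}$ we have $\sum_n a(1,n)x^n = \bigl((1-x)/(1-2x)\bigr)^2$ and $\sum_n a_1(1,n)x^n = (1-x)/(1-2x)^2$. The generating function of $T(n)$ therefore factors as
\[
\left(\frac{1-x}{1-2x}\right)^{2} \cdot \frac{x(1-x)}{(1-2x)^{2}} \;=\; \frac{x(1-x)^{3}}{(1-2x)^{4}},
\]
which coincides with $\sum_n a_1(3,n-1)x^n$, completing the proof. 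The main obstacle I anticipate is conceptual rather than algebraic: one must verify that the count is genuinely additive in the replaced occurrence, so that the contribution from each copy of $j$ factors cleanly as $a(1,n-j)\cdot P(j)$ without any cross-interaction with the surrounding parts of the original composition. Once this is secured, the remainder is the routine generating-function identification above.
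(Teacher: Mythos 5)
Your proof is correct and takes essentially the same route as the paper's: decompose by the value $j$ of the replaced part, use $S(n,j)=a(1,n-j)$ together with the total part count $a_1(1,j-1)$ over compositions of $j$, and identify the resulting convolution's generating function with that of $a_1(3,n-1)$. In fact your write-up is more careful than the printed proof, which misprints the second factor as $a_1(1,n-j)$ where it should be $a_1(1,j-1)$ as you have it, and which glosses over the factor of $x$ that accounts for the index shift from $a_1(3,n)$ to $a_1(3,n-1)$ --- your explicit tracking of that shift via $\sum_j a_1(1,j-1)x^j = \frac{x(1-x)}{(1-2x)^2}$ closes that gap.
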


\begin{proof}
	The number of times $j$ is used in the compositions of $n$ is $a(1,n-j)$.
	If the number of parts used in the compositions of $j$ are substituted for $j$ then the resulting number of parts is $a(1,n-j)a_1(1,n-j)$.
	The total number of these parts is
	\[
		\sum_{j=1}^n a(1,n-j)a_1(1,n-j),
	\]
	which has corresponding generating function
	\[
		\left(\frac{1-x}{1-2x}\right)^2\left(\frac{1}{1-x}\right)^2\left(\frac{1-x}{1-2x}\right) = \frac{1}{1-x}\left(\frac{1-x}{1-2x}\right)^4.
	\]
	This corresponds to $a_1(3,n-1)$.
\end{proof}

\begin{defn}
	For $r,n \geq 1$, let $C_a(r,n)$ denote the number of parts of all compositions counted by $a(r,n)$.
\end{defn}

\begin{prop}
	For all $r,n$,
	\[
		C_a(r,n) = (r+1)a_1(r+1,n-1) + ra_0(r,n).
	\]
\end{prop}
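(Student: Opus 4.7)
The plan is to argue by generating functions, as has been the paper's style for identities of this flavor. First I split
\[
    C_a(r,n) = W(r,n) + r\cdot a(r,n),
\]
where $W(r,n)$ denotes the total number of \emph{white} tiles appearing across all $(n+r)$-tilings counted by $a(r,n)$; the second term is accounted for immediately by $r\cdot a_0(r,n)$, so the entire task reduces to showing
\[
    W(r,n) = (r+1)\,a_1(r+1,n-1).
\]

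To handle the left side, I would use the standard ``sequence of atoms'' decomposition of a two-toned tiling as an arbitrary sequence, each term of which is either a red square (contributing $x$) or a white tile of length $\geq 1$ (contributing $y^k$ for $k\geq 1$). Introducing a marker $u$ on each white tile gives the trivariate generating function
\[
    \sum_{r,n\geq 0}\sum_{T}u^{w(T)}x^r y^n \;=\; \frac{1-y}{1-y-x+xy-uy},
\]
where $w(T)$ counts white tiles in $T$. Differentiating with respect to $u$ and setting $u=1$ produces the generating function for $W(r,n)$, namely
\[
    \sum_{r,n\geq 0} W(r,n)\, x^r y^n \;=\; \frac{y(1-y)}{(1-2y-x+xy)^2}.
\]

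For the right side, I would use the formula $\sum_{n\geq 0} a_1(r+1,n)y^n = \tfrac{1}{1-y}\bigl(\tfrac{1-y}{1-2y}\bigr)^{r+2}$ already established in the paper, shift the index by one to get the generating function of $a_1(r+1,n-1)$, multiply by $(r+1)$, and then sum against $x^r$. The sum $\sum_{r\geq 0}(r+1)z^r = (1-z)^{-2}$ with $z = x(1-y)/(1-2y)$ collapses the whole expression, after routine algebra, to $\tfrac{y(1-y)}{(1-2y-x+xy)^2}$, matching the previous display and completing the proof.

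The main obstacle I anticipate is simply bookkeeping: justifying the marking-variable generating function rigorously (the decomposition of a two-toned tiling as a sequence of red atoms and white atoms is straightforward, but one has to be careful that a white tile of length $k$ contributes $uy^k$, so a single ``white atom'' has generating function $uy/(1-y)$, hence the denominator $1-x-uy/(1-y)$ before clearing). Once that identification is accepted, the rest is a symbolic manipulation identical in spirit to the generating function computations used for Theorem~\ref{thm: thm1} and Proposition~\ref{prop: F = a}, so no genuinely new technique is required.
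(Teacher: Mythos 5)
Your proof is correct, but it reaches the key identity by a genuinely different route than the paper. Both arguments begin with the same forced split: every tiling counted by $a(r,n)$ contains exactly $r$ red squares, giving the $r\,a_0(r,n)$ term, so the whole content lies in showing that the total number of white tiles is $(r+1)a_1(r+1,n-1)$. The paper does this combinatorially: it conditions on the number $j$ of white tiles, observes that there are $\binom{n-1}{j-1}\binom{r+j}{r}$ tilings with exactly $j$ white tiles (a composition of $n$ into $j$ parts interleaved with $r$ indistinguishable red squares), and then asserts the closed form
\[
	\sum_{j=1}^n j\binom{r+j}{r}\binom{n-1}{j-1} = (r+1)a_1(r+1,n-1),
\]
which it leaves unverified (it follows from $j\binom{r+j}{r} = (r+1)\binom{r+j}{r+1}$ combined with the paper's expansion $a_s(r,n) = \sum_{j}\binom{n-1+s}{j-1+s}\binom{r+j}{r}$). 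You instead mark each white tile with a variable $u$, and your computations check out: the white atom contributes $uy/(1-y)$, the sequence construction gives $\frac{1-y}{1-y-x+xy-uy}$, which correctly specializes to $A(x,y)$ at $u=1$; differentiating at $u=1$ yields $\frac{y(1-y)}{(1-2y-x+xy)^2}$; and on the other side, summing $(r+1)z^r = (1-z)^{-2}$ with $z = x(1-y)/(1-2y)$ collapses $\sum_{r\geq 0}(r+1)\frac{y}{1-y}\bigl(\frac{1-y}{1-2y}\bigr)^{r+2}x^r$ to the same rational function. What your approach buys is a self-contained verification that bypasses the binomial identity the paper takes on faith; what the paper's approach buys is combinatorial transparency, since each factor in its sum has a direct tiling interpretation, whereas your argument certifies equality of two generating functions without explaining \emph{why} the white-tile count is an $a_1$-type quantity. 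Both proofs are sound.
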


\begin{proof}
	Consider first the number of red tiles needed to form the compositions of $a(r,n)$.
	Each composition of $n$ has $r$ red tiles, so the number of red tiles needed is $ra(r,n)$.
	
	Now consider the number of white tiles needed to form the compositions of $a(r,n)$.
	For a composition of $a(r,n)$ consisting of exactly $j$ white tiles, the total number of parts is $j\binom{r+j}{r}\binom{n-1}{j-1}$, and the total number of white tiles is
	\[
		\sum_{j = 1}^n j\binom{r+j}{r}\binom{n-1}{j-1} = (r+1)a_1(r+1,n-1).
	\]
\end{proof}

Next, we denote by $C_b(n,k)$ the number of compositions of $n$ where all parts $k$ must occur consecutively.  
For example, $C_b(4,1) = 7$ since the compositions counted are $(4)$, $(3,1)$, $(1,3)$, $(2,2)$, $(2,1,1)$, $(1,1,2)$, $(1,1,1,1)$. 
We will refine $C_b(n,k)$ by $C_b(n,k,p)$, which denotes the number of compositions of $n$ having exactly $p$ parts $k$, which must occur consecutively.
So, $C_b(4,1,2) = 2$ since the compositions counted are $(2,1,1)$ and $(1,1,2)$. 

\begin{prop}
	For all $n, k$, and $p$,
	\[
		C_b(n,k,p) = C(1,n-pk,\widehat k) = E_1(n-(p-1)k,k).
	\]
\end{prop}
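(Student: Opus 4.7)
The plan is to prove the two claimed equalities separately. The second equality, $C(1, n-pk, \widehat{k}) = E_1(n-(p-1)k, k)$, should follow directly from the theorem proved earlier in this subsection, namely $C(n, m, \widehat{k}) = E_m(n+mk, k)$, by reading the arguments of $C$ in the order matching that theorem and setting $n \mapsto n-pk$, $m = 1$; this yields $E_1((n-pk) + 1 \cdot k, k) = E_1(n-(p-1)k, k)$. So the substantive work is the identity $C_b(n,k,p) = E_1(n-(p-1)k, k)$.

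For that, I would construct a direct bijection between the two sets of compositions. Given $\lambda \in C_b(n,k,p)$, the $p$ copies of $k$ occur in a single contiguous block of total length $pk$; define the contraction map $\varphi$ by replacing this block with a single copy of $k$. Then $\varphi(\lambda)$ is a composition of $n-(p-1)k$, and because no part of $\lambda$ outside the block equals $k$, the image contains exactly one part equal to $k$, so $\varphi(\lambda) \in E_1(n-(p-1)k, k)$. Conversely, given $\mu \in E_1(n-(p-1)k, k)$, the expansion map $\psi$ replaces the unique part equal to $k$ by a run of $p$ consecutive copies of $k$. The resulting composition has exactly $p$ parts equal to $k$, all consecutive, so it lies in $C_b(n,k,p)$. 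It is immediate from the constructions that $\psi = \varphi^{-1}$, which establishes the equality.

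There is no serious obstacle: the only point requiring care is verifying that $\varphi$ actually lands in $E_1(\cdots)$, which amounts to noting that every copy of $k$ in the original composition was already contained in the contracted block, and symmetrically that $\psi$ does not introduce extraneous copies of $k$ (guaranteed by $\mu$ having only one). The argument is essentially the observation that a block of $p$ consecutive $k$'s is the same datum as a single distinguished $k$ together with the multiplicity $p$, which is exactly how a single red square stands in for a run of repeated parts in the two-toned tiling picture.
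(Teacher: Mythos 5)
Your proof is correct and takes essentially the same route as the paper: the paper's argument is exactly the contraction you describe, with the block of $p$ consecutive $k$'s standing in for a single red square (yielding $C(1,n-pk,\widehat k)$, whose arguments are indeed to be read as one red square together with white tiles of total length $n-pk$, as you inferred despite the swapped order relative to the earlier theorem), which is then reinterpreted as a single part $k$ to give $E_1(n-(p-1)k,k)$. Your explicit composition-level bijection plus the citation of $C(n,m,\widehat k)=E_m(n+mk,k)$ is the same chain of identifications made slightly more formal, so nothing further is needed.
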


\begin{proof}
	Consider the compositions of $n$ in which we insist the parts that are $k$ are blocked together exactly $p$ times. 
	The quantity $n-pk$ then corresponds to the parts not equal to $k$.
	Because all of the $k$s are consecutive, the compositions are in bijection with compositions with a single $k$. 
	Thus, in terms of an $(n+r)$-tiling, the $(k,\dots,k)$ is in bijection with a single red square.
	So, $C_b(n,k,p) = C(1,n-pk, \widehat k)$.
	
	Next, recall that the single red square appearing in $C(1,n-pk,\widehat k)$ can represent $k$, which leads to the interpretation that $C(1,n-pk,\widehat k)$ also counts the compositions of $n-pk + k = n-(p-1)k$ having exactly one $k$.
	By definition, these are enumerated by $E_1(n-(p-1)k,k)$.
\end{proof}

\begin{prop}
	For all $n$ and $k$,
	\[
		C_b(n,k) = C(n,\widehat k) + \sum_{j \geq 0} E_1(n-jk,k).
	\]
\end{prop}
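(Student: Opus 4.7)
The plan is to decompose $C_b(n,k)$ according to the number $p$ of parts equal to $k$ in each composition, and then invoke the previous proposition to handle each $p \geq 1$ term uniformly.

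First I would write
\[
C_b(n,k) = \sum_{p \geq 0} C_b(n,k,p),
\]
which is valid because every composition counted by $C_b(n,k)$ has some definite number $p \geq 0$ of parts equal to $k$ (all of which, when $p \geq 1$, are required to be consecutive; when $p=0$ the consecutivity condition is vacuous).

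Next I would separate the $p=0$ term from the rest. The $p=0$ term counts compositions of $n$ with no part equal to $k$, which is precisely $C(n,\widehat k)$ by definition. For $p \geq 1$, the previous proposition gives $C_b(n,k,p) = E_1(n-(p-1)k,k)$. Substituting and reindexing by $j = p-1$ yields
\[
C_b(n,k) = C(n,\widehat k) + \sum_{p \geq 1} E_1(n-(p-1)k,k) = C(n,\widehat k) + \sum_{j \geq 0} E_1(n-jk,k),
\]
which is the claimed identity.

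There is essentially no obstacle here: the work has already been done in the preceding proposition, which converts the ``block of $k$'s'' condition into a single $k$. The only thing to be careful about is the reindexing (in particular that $p=1$ corresponds to $j=0$, giving the $E_1(n,k)$ term), and the fact that terms with $jk > n$ contribute zero so the upper limit of the sum can be left implicit as ``$j \geq 0$.''
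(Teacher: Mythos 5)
Your proof is correct and matches the paper's argument: the paper likewise splits off the no-$k$ case as $C(n,\widehat k)$, counts the compositions with $j \geq 1$ consecutive copies of $k$ as $E_1(n-(j-1)k,k)$ via the preceding proposition, and reindexes. You have simply made the appeal to the previous proposition and the index shift $j = p-1$ more explicit than the paper does.
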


\begin{proof}
	The number of compositions with no $k$s is $C(n,\widehat k)$.
	Additionally, the number of compositions for which the $j$ copies of $k$ are all consecutive is $E_1(n-(j-1)k,k)$.
	Summing these cases and adjusting indices gives the result.
\end{proof}

The following follows directly from inclusion-exclusion, so the proof is omitted.

\begin{prop}
	For all $n,k,p$, we have
	\[
		C_b(n,k,p) = \sum_{j \geq 1} (-1)^{j+1}ja(j,n-k(p+j-1)).
	\]
\end{prop}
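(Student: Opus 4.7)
My plan is to not attempt an independent inclusion--exclusion from scratch, but to leverage the previous proposition, which already provides the identity
\[
    C_b(n,k,p) = E_1(n-(p-1)k,k),
\]
and then apply the closed form for $E_p(n,k)$ that was established earlier in the article (the analogue of the formula for $E_p(n,m,k)$ but with no cap on part sizes, i.e.\ the $k\to\infty$ limit of that identity, equivalently \cite[Identity 13]{twotoned}). That formula reads
\[
    E_p(n,k) = \sum_{j\geq p}(-1)^{j-p}\binom{j}{p}\,a(j,n-jk),
\]
and it is already the inclusion--exclusion result the remark is alluding to: the factor $\binom{j}{p}$ comes from choosing which $p$ of the $j$ marked copies of $k$ are the ones we want to keep.

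Specialising to $p=1$ gives
\[
    E_1(N,k) = \sum_{j\geq 1}(-1)^{j-1}\,j\,a(j,N-jk).
\]
I then substitute $N = n-(p-1)k$, which yields
\[
    C_b(n,k,p) = \sum_{j\geq 1}(-1)^{j-1}\,j\,a\bigl(j,\,n-(p-1)k-jk\bigr)
               = \sum_{j\geq 1}(-1)^{j+1}\,j\,a\bigl(j,\,n-k(p+j-1)\bigr),
\]
which is precisely the claimed identity.

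There is really no obstacle: once the previous proposition reduces $C_b(n,k,p)$ to an instance of $E_1$, the index shift $N\mapsto n-(p-1)k$ together with $N-jk = n-k(p+j-1)$ is purely algebraic. The only subtlety worth noting is that the inclusion--exclusion structure is inherited from $E_1$ rather than derived directly: combinatorially, $j\,a(j,n-k(p+j-1))$ counts compositions of $n$ in which a block of $p+j-1$ specified consecutive copies of $k$ has been singled out together with a choice of which one of the $j$ ``extra'' $k$'s (beyond the $p$ we wish to retain) plays a distinguished role, and the alternating sign removes overcounting by standard sieving --- but this observation is exactly the content of the already-cited Identity~13, which is why the authors omit the proof.
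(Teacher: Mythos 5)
Your proof is correct: combining the immediately preceding proposition $C_b(n,k,p) = E_1(n-(p-1)k,k)$ with the $p=1$ case of $E_p(n,k) = \sum_{j\geq p}(-1)^{j-p}\binom{j}{p}a(j,n-jk)$ (i.e.\ \cite[Identity 13]{twotoned}, used earlier in the paper's treatment of $S(n,k)$) and the index computation $n-(p-1)k-jk = n-k(p+j-1)$ gives exactly the claimed identity, as a spot check such as $C_b(4,1,2) = a(1,2) - 2a(2,1) + 3a(3,0) = 5-6+3 = 2$ confirms. The paper omits its proof entirely, saying only that the result ``follows directly from inclusion-exclusion,'' and your derivation---which inherits the inclusion-exclusion from $E_1$ via the previous proposition---is precisely the natural realization of that omitted argument.
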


Let $C(n,[k])$ denote the number of compositions of $n$ with no parts a multiple of $k$. 
For example, $C(4,[2]) = 3$, counting the compositions $(3,1)$, $(1,3)$, and $(1,1,1,1)$. 

\begin{prop}
	For all $n$ and $k$,
	\[
		C(n,[k]) = F(n+1,k) - F(n+1-k,k).
	\]
\end{prop}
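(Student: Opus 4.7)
The plan is to prove the identity by comparing generating functions, which is the same technique used throughout this section and is well-suited here because both sides admit clean closed-form generating functions.

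First I would write down the generating function for $C(n,[k])$. The allowed parts are the positive integers that are not multiples of $k$, so the generating function of the allowed parts is
\[
	\sum_{i \geq 1} x^i - \sum_{j \geq 1} x^{jk} = \frac{x}{1-x} - \frac{x^k}{1-x^k}.
\]
By the standard generating function for compositions with restricted parts,
\[
	\sum_{n \geq 0} C(n,[k])x^n = \frac{1}{1 - \frac{x}{1-x} + \frac{x^k}{1-x^k}}.
\]
Clearing denominators by multiplying numerator and denominator by $(1-x)(1-x^k)$, I expect the denominator to simplify, after collecting like terms, to $1 - 2x + x^{k+1}$, yielding
\[
	\sum_{n \geq 0} C(n,[k])x^n = \frac{(1-x)(1-x^k)}{1 - 2x + x^{k+1}}.
\]

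Next I would compute the generating function of the right-hand side. Using the standard identity $\sum_{n \geq 0} F(n+1,k)x^n = \frac{1}{1-x-x^2-\cdots-x^k}$, and noting that $F(n+1-k,k) = 0$ for $n < k$, the shifted sequence $\{F(n+1-k,k)\}_{n \geq 0}$ has generating function $x^k$ times the same expression. Hence
\[
	\sum_{n \geq 0} \bigl(F(n+1,k) - F(n+1-k,k)\bigr)x^n = \frac{1-x^k}{1-x-x^2-\cdots-x^k}.
\]
Then I would use the identity $1 - x - x^2 - \cdots - x^k = \frac{1-2x+x^{k+1}}{1-x}$ to rewrite this as $\frac{(1-x)(1-x^k)}{1-2x+x^{k+1}}$, which matches the generating function obtained for $C(n,[k])$.

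Since the two generating functions agree coefficient by coefficient, the identity $C(n,[k]) = F(n+1,k) - F(n+1-k,k)$ follows. The only potential obstacle is the bookkeeping in simplifying the $C(n,[k])$ generating function — one needs to verify carefully that the cross-terms $-x^k + x^{k+1} + x^k - x^{k+1}$ cancel in the denominator — but this is routine algebra rather than a genuine difficulty.
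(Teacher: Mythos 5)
Your proof is correct and takes essentially the same approach as the paper: both sides are compared via generating functions, with $\sum_{n \geq 0} C(n,[k])x^n = \bigl(1 - \sum_{i \geq 1} x^i + \sum_{j \geq 1} x^{jk}\bigr)^{-1}$ shown to equal $\frac{1-x^k}{1-x-x^2-\cdots-x^k}$, the generating function of $F(n+1,k) - F(n+1-k,k)$. The only difference is that you explicitly carry out the simplification (reducing both sides to $\frac{(1-x)(1-x^k)}{1-2x+x^{k+1}}$, which checks out) where the paper declares it routine and omits it.
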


\begin{proof}
	The generating function for the right side of the desired equation is
	\[
		\frac{1-x^k}{1-x-x^2-\cdots-x^k},
	\]
	whereas the generating function for the left hand side is
	\[
		\frac{1}{1-\left(\sum_{i \geq i} x^i\right) + \left(\sum_{j \geq 1} x^{jk}\right)}.
	\]
	After simplifying the latter expression, we obtain the former. 
\end{proof}

We now return to runs of parts. 
Recall that given a composition $\lambda = (\lambda_1,\dots,\lambda_k)$, a \emph{run} in $\lambda$ is a subsequence of the form 
\[
	\lambda_{i-1} \neq \lambda_i = \lambda_{i+1} = \cdots = \lambda_j \neq \lambda_{j+1}
\]
for some $1 \leq i \leq j \leq k$, using the convention $\lambda_0 = \lambda_{k+1} = 0$.
Let $R(n,k)$ denote the total number of runs consisting of part $k$ over all compositions of $n$, without regard to the length of the run.

\begin{prop}
	For all $n$ and $k$, $R(n,k) = a(1,n-k) - a(1,n-2k)$.
\end{prop}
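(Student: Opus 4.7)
The plan is to adapt the argument of Proposition~\ref{prop: X3} to the unrestricted setting (no cap $k$ on part sizes). I first interpret $a(1,n-k)$ combinatorially: an $(n-k+1)$-tiling with white tiles of any length totalling $n-k$ together with one red square, with the red square reinterpreted as a part equal to $k$, encodes a composition of $n$ together with a distinguished occurrence of the part $k$. Hence $a(1,n-k)$ equals the total number of parts equal to $k$ taken over all compositions of $n$ (i.e.\ $S(n,k)$).

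Next I connect this to $R(n,k)$. In any composition, each maximal block (run) of $k$'s contributes its length to $S(n,k)$ but only $1$ to $R(n,k)$. So the discrepancy
\[
    S(n,k) - R(n,k)
\]
is exactly the number of designated $k$'s that are \emph{not} the leftmost entry of their run, i.e.\ distinguished $k$'s immediately preceded by another $k$. Call these \emph{continuation} $k$'s.

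The key step is to count continuations via a bijection with the objects counted by $a(1,n-2k)$. Given a composition $\lambda$ of $n$ with a distinguished continuation $\lambda_i=k$ (so $\lambda_{i-1}=k$ as well), delete the entry at position $i$ and redistinguish position $i-1$; the result is a composition of $n-k$ together with a distinguished part $k$, and by the interpretation above these are enumerated by $a(1,(n-k)-k)=a(1,n-2k)$. The inverse map inserts a $k$ immediately after the distinguished part, so the correspondence is a bijection.

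Combining gives $R(n,k) = a(1,n-k) - a(1,n-2k)$. The only point to watch is the boundary: if $n<2k$ then no run of $k$'s can have length exceeding one, so there are no continuations, which matches the convention that $a(1,n-2k)=0$ in that range. I expect the main obstacle to be nothing more than phrasing the bijection cleanly enough that the reader sees why every continuation arises exactly once from a unique composition of $n-k$ with a distinguished $k$; once that bijection is in place, the rest mirrors Proposition~\ref{prop: X3} verbatim.
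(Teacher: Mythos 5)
Your proof is correct and takes essentially the same route as the paper's: interpret $a(1,n-k)$ as the number of compositions of $n$ with a distinguished occurrence of the part $k$ (the red square playing the role of the marked $k$, so $a(1,n-k)=S(n,k)$), and identify the surplus $S(n,k)-R(n,k)$ with $a(1,n-2k)$. If anything, your explicit deletion/insertion bijection for the ``continuation'' $k$'s makes rigorous what the paper only gestures at when it says some tilings ``merely increase the length of existing runs.''
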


\begin{proof}
	If $k > n-k$, all runs of $k$ have length one.
	The part $k$ can be represented by a red square, combining with white squares representing nonnegative integers less than $k$.
	The numbers of such $(n+r)$-tilings is given by $a(1,n-k)$.
	Also note that if $k > n-k$, then $a(1,n-2k) = 0$.
	
	If $k \leq n-k$, then some of the white tiles have length $k$, thereby being indistinguishable from the red square representing $k$.
	Therefore, some of the $(n+r)$-tilings merely increase the length of existing runs.
	The number of such instances is $a(1,n-2k)$ because increasing the length of a run has the same effect.
	Hence, $a(1,n-k) - a(1,n-2k) = R(n,k)$.
	
\end{proof}

\begin{cor}
	For all $n$ and $k$,
	\[
		R(n,k) = 2^{n-k-2}(n-k+3) - 2^{n-2k-2}(n-2k+3).
	\]
\end{cor}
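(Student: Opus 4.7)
The plan is to feed the immediately preceding proposition into an explicit closed form for $a(1,\cdot)$. From the $r=1$ case of equation \eqref{eq:1}, we have
\[
    \sum_{m \geq 0} a(1,m)\,x^m = \left(\frac{1-x}{1-2x}\right)^{2},
\]
and a short partial-fraction or series manipulation (writing $(1-x)^2 = (1-2x)^2 + 2x(1-2x) + x^2$ and dividing by $(1-2x)^2$) extracts
\[
    a(1,m) = 2^{m-2}(m+3) \qquad \text{for } m \geq 1.
\]
This is consistent with the identity $S(n,k) = a(1,n-k) = 2^{n-k-2}(n-k+3)$ established in the previous subsection; one could also simply cite that identity under the substitution $m = n-k$ and read off the formula for $a(1,m)$.

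Once the closed form is in hand, the corollary is a one-line substitution: the preceding proposition gives $R(n,k) = a(1,n-k) - a(1,n-2k)$, and applying the formula $a(1,m) = 2^{m-2}(m+3)$ to each term yields exactly
\[
    R(n,k) = 2^{n-k-2}(n-k+3) - 2^{n-2k-2}(n-2k+3).
\]

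The only mild obstacle is the boundary regime $n < 2k$. In that range the preceding proposition takes $a(1,n-2k) = 0$ by the tiling definition, so one must adopt the convention that $2^{n-2k-2}(n-2k+3)$ is also interpreted as $0$ whenever $n - 2k < 1$. With that convention (which is standard in this paper, given the way edge cases of $a(1,\cdot)$ have been handled), no further argument is required and the corollary is immediate.
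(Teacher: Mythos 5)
Your proposal is correct and is essentially the paper's own (implicit) argument: the corollary appears with no separate proof precisely because it is the one-line substitution you perform, feeding the closed form $a(1,m) = 2^{m-2}(m+3)$ --- already on record in the paper via $S(n,k) = a(1,n-k) = 2^{n-k-2}(n-k+3)$ --- into the preceding proposition $R(n,k) = a(1,n-k) - a(1,n-2k)$, and your generating-function extraction of that closed form from $\left(\frac{1-x}{1-2x}\right)^2$ is a valid (if optional) self-contained derivation of it. One boundary remark: your convention of zeroing the second term whenever $n-2k<1$ is right for $n<2k$, where $a(1,n-2k)=0$, but at $n=2k$ the proposition's term is $a(1,0)=1$ (not $0$, and not the literal $2^{-2}\cdot 3$), so the displayed formula fails there under any uniform convention --- an edge case the paper's blanket ``for all $n$ and $k$'' silently glosses over as well.
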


Extending $R(n,k)$, let $R(n)$ denote the total number of all runs over all compositions of $n$. 

\begin{cor}
	For all $n$,
	\[
		R(n) = \sum_{k \geq 1} a(1,n-(2k-1)).
	\]
\end{cor}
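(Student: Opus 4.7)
The plan is to reduce the claim to the previous proposition by summing over $k$ and collapsing an alternating sum. Since $R(n)$ counts all runs over all compositions of $n$ without restriction on the value of the run, and every run has some positive part value $k$, I would first write
\[
R(n) = \sum_{k \geq 1} R(n,k),
\]
where the sum is effectively finite because $R(n,k) = 0$ for $k > n$.

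Next I would plug in the formula $R(n,k) = a(1, n-k) - a(1, n-2k)$ furnished by the preceding proposition, obtaining
\[
R(n) = \sum_{k \geq 1} a(1, n-k) \;-\; \sum_{k \geq 1} a(1, n-2k).
\]
The first sum ranges over all positive integer shifts of $n$, while the second sum ranges over the even shifts only. Subtracting, the even-indexed terms $a(1, n-2), a(1, n-4), \ldots$ in the first sum are exactly cancelled by the second sum, leaving only the odd-indexed terms $a(1, n-1), a(1, n-3), a(1, n-5), \ldots$, which is precisely $\sum_{k \geq 1} a(1, n-(2k-1))$.

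The only subtlety, and the main thing to be careful about, is the treatment of the tail of the series. I would note that $a(1, m) = 0$ whenever $m < 0$ (since there is no valid two-toned tiling of negative length), so both sums truncate cleanly and the cancellation between even-indexed terms is unambiguous; no convergence or reindexing issue arises. This makes the proof essentially a one-line telescoping argument built on top of the previous proposition, so the write-up can be kept very short.
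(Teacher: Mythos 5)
Your proof is correct and follows essentially the same route as the paper: sum $R(n,k)=a(1,n-k)-a(1,n-2k)$ over $k$ and cancel the even shifts, leaving the odd ones. Your explicit remark that $a(1,m)=0$ for $m<0$ (so the telescoping is finite and unambiguous) is a small point of care the paper's proof leaves implicit.
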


\begin{proof}
	The result follows from observing that
	\[
		R(n) = \sum_{k \geq 1} R(n,k) = \sum_{k \geq 1} a(1,n-k) - a(1,n-2k).
	\]
	In this sum, all terms of the form $a(1,n-2k)$ are cancelled, leaving only those of the form $a(1,n-(2k+1))$, as desired.
\end{proof}

Let $E(n)$ denote the total number of parts used in all compositions of $n$.
This is well-known to be $(n+1)2^{n-2}$, but what is more interesting here is that we can relate $E(n)$ to other aspects of compositions.
For example, note that $C(n) \leq R(n) \leq E(n)$.
In fact, this relationship can be made more explicit.

\begin{lem}
	The number of parts making up the compositions of $n$ is equal to the number of runs of $n$ and of $n-1$.
	In other words, $E(n) = R(n) + R(n-1)$.
\end{lem}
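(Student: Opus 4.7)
The plan is to prove $E(n) = R(n) + R(n-1)$ by comparing generating functions, using the closed form $E(n) = (n+1)2^{n-2}$ stated just above the lemma together with the formula $R(n) = \sum_{k \geq 1} a(1, n-(2k-1))$ from the preceding corollary and the $r = 1$ specialization $\sum_{n \geq 0} a(1,n)x^n = \left(\frac{1-x}{1-2x}\right)^2$ of equation \eqref{eq:1}.

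First I would compute $\mathcal{E}(x) := \sum_{n \geq 1} E(n)x^n$ directly from $E(n) = (n+1)2^{n-2}$. Using $\sum_{n \geq 1} (n+1)u^n = \frac{u(2-u)}{(1-u)^2}$ with $u = 2x$ and extracting the factor of $\tfrac{1}{4}$ gives $\mathcal{E}(x) = \frac{x(1-x)}{(1-2x)^2}$. Next I would compute $\mathcal{R}(x) := \sum_{n \geq 0} R(n)x^n$ by multiplying the generating function of $\{a(1,n)\}_{n \geq 0}$ against $\sum_{k \geq 1} x^{2k-1} = \frac{x}{1-x^2}$. This yields $\mathcal{R}(x) = \frac{x(1-x)^2}{(1-x^2)(1-2x)^2} = \frac{x(1-x)}{(1+x)(1-2x)^2}$. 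Since $\sum_n (R(n) + R(n-1))x^n = (1+x)\mathcal{R}(x)$, the $(1+x)$ factor cancels immediately and the right-hand side collapses to $\frac{x(1-x)}{(1-2x)^2} = \mathcal{E}(x)$. Comparing coefficients (with a quick sanity check at $n = 1, 2$ to pin down the index conventions) finishes the argument.

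The main obstacle is essentially bookkeeping: confirming that $E(n) = (n+1)2^{n-2}$ truly holds starting at $n = 1$ and that $R(n)$ is correctly defined (or vanishes) for the small indices needed to handle the boundary term $R(n-1)$. A combinatorial bijection between (composition of $n$, marked part) and the disjoint union of (composition of $n$, marked run) with (composition of $n-1$, marked run) would be more illuminating, but the natural candidate---merging two equal consecutive parts $(c,c)$ into a single part $2c-1$---produces only odd-valued parts in the image, so it does not invert cleanly; the generating-function route sidesteps this difficulty entirely.
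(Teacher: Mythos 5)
Your proof is correct, and it takes a recognizably different route from the paper's, though the two arguments encode the same identity. The paper forms no generating function at all: it writes $E(n) = a_1(1,n-1) = \sum_{i \geq 1} a(1,n-i)$ (the two-toned-tiling fact underlying your closed form, since $a_1(1,n-1)$ has generating function $\frac{x(1-x)}{(1-2x)^2}$, which is exactly your $\mathcal{E}(x)$) and then splits that sum by the parity of the shift $i$: the odd shifts give $\sum_{k \geq 1} a(1,n-(2k-1)) = R(n)$ by the preceding corollary, and the even shifts give $\sum_{k \geq 1} a(1,(n-1)-(2k-1)) = R(n-1)$. Your $(1+x)$ cancellation is precisely the generating-function shadow of that parity split, via $\frac{1}{1-x} = \frac{1+x}{1-x^2}$. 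What the paper's version buys is a four-line, coefficient-level telescoping with no boundary bookkeeping (the convention $a(1,m)=0$ for $m<0$ handles everything); what yours buys is an independent consistency check, since you derive $\mathcal{E}(x)$ from the stated closed form $(n+1)2^{n-2}$ rather than from the identity $E(n) = a_1(1,n-1)$, so your computation silently reconfirms that identity along the way. Your individual steps all check out: $\mathcal{E}(x) = \frac{x(1-x)}{(1-2x)^2}$, $\mathcal{R}(x) = \frac{x(1-x)}{(1+x)(1-2x)^2}$, and the boundary values $R(0)=0$, $E(1)=1$ are consistent with the index conventions. Your closing remark about the failed merge bijection is apt: the paper offers no bijective proof either, explicitly flagging its argument as following ``entirely algebraically,'' so a combinatorial proof of $E(n) = R(n) + R(n-1)$ remains a natural open exercise.
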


\begin{proof}
	This follows entirely algebraically.
	Observe that
	\[\begin{aligned}
		E(n) &= a_1(1,n-1) \\
			&= \sum_{i \geq 1} a(1,n-i) \\
			&= \left(\sum_{i \geq 1} a(1,n-2i+1)\right) + \left(\sum_{i \geq 1} a(1,n-2i)\right) \\
			&= R(n) + R(n-1),
	\end{aligned}\]
	as desired.
\end{proof}

Next we will refine $R$ by letting $R(n,k,l)$ denote the number of runs of $k$ of length $l$ over all compositions of $n$.

\begin{conj}
	For all $n,k,l$, 
	\[
		R(n,k,l) = a(1,n-kl) - 2a(1,n-(l+1)k) + a(1,n-(l+2)k).
	\]
\end{conj}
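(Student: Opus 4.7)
The plan is to introduce an auxiliary quantity $B(l)$ equal to the number of pairs $(\lambda,s)$ where $\lambda$ is a composition of $n$ and $s$ is a starting position with $\lambda_s=\lambda_{s+1}=\cdots=\lambda_{s+l-1}=k$. In other words, $B(l)$ counts distinguished blocks of $l$ consecutive $k$'s appearing anywhere inside a composition of $n$, with no requirement that the block be a full run. Because a run of $k$ of length $l'$ contributes exactly $\max(l'-l+1,0)$ valid starting positions for such a block, one immediately obtains
\[
    B(l)=\sum_{l'\geq l}(l'-l+1)\,R(n,k,l').
\]

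Next I would establish the identity $B(l)=a(1,n-kl)$ by a bijection with two-toned tilings. Given $(\lambda,s)$ counted by $B(l)$, replace the distinguished block $(\underbrace{k,\ldots,k}_{l})$ starting at position $s$ by a single red square, and keep every remaining part of $\lambda$ as a white tile. The result is a two-toned tiling of length $n-kl+1$ with exactly one red square, hence one counted by $a(1,n-kl)$. The inverse map takes such a tiling and replaces its unique red square by $l$ consecutive copies of $k$, recovering both the composition of $n$ and the starting index $s$ of the distinguished block. The bijection is well-defined even if the white tiles adjacent to the red square happen to have length $k$ (the distinguished block can sit strictly inside a longer run), so no double-counting occurs. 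This is the natural refinement of the red-square-represents-$k$ reasoning already used in the proofs of $R(n,k)$ and $r(n,j,\{k\})$.

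Finally, I would extract $R(n,k,l)$ via the second difference $R(n,k,l)=B(l)-2B(l+1)+B(l+2)$. Substituting the first expression for $B$ and collecting the coefficient of each $R(n,k,l')$, the coefficient is $1$ when $l'=l$ and
\[
    (l'-l+1)-2(l'-l)+(l'-l-1)=0
\]
when $l'\geq l+1$ (with omitted terms contributing $0$ because the corresponding $B$-summand is absent). Hence the linear combination collapses to $R(n,k,l)$, and substituting $B(m)=a(1,n-km)$ yields the conjectured identity. I do not expect a substantial obstacle; the main care point is agreeing on the convention $a(1,j)=0$ for $j<0$ so that boundary cases (e.g., when $(l+1)k$ or $(l+2)k$ exceeds $n$) are automatically subsumed.
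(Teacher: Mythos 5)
The statement you have proved is left as a \emph{conjecture} in the paper---no proof is given there---so there is no official argument to compare yours against; what matters is whether your proof stands on its own, and it does. The weighted-count identity $B(l)=\sum_{l'\geq l}(l'-l+1)\,R(n,k,l')$ is correct, the second difference $B(l)-2B(l+1)+B(l+2)$ collapses to $R(n,k,l)$ exactly as you compute (coefficient $1$ at $l'=l$, and $(l'-l+1)-2(l'-l)+(l'-l-1)=0$ for $l'\geq l+2$, with the $l'=l+1$ case giving $2-2=0$), and the key bijection $B(l)=a(1,n-kl)$ is sound: deleting the distinguished block of $l$ consecutive parts $k$ from the pair $(\lambda,s)$ leaves a composition of $n-kl$ together with a choice of gap, i.e.\ a two-toned tiling of length $n-kl+1$ with one red square, and reinserting $l$ copies of $k$ at that gap deterministically recovers both $\lambda$ and $s$. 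Your explicit remark that the distinguished block need not be a maximal run---so that white tiles of length $k$ adjacent to the red square cause no double counting---is precisely the observation that makes $B(l)$, rather than $R(n,k,l)$ itself, the correct intermediary; this is where a naive ``red square represents the run'' argument would break. Two conventions should be stated if you write this up: $a(1,j)=0$ for $j<0$, as you note, and that the length of a run means its number of parts (the paper's formal definition of run length has an off-by-one inconsistency with its own example $(2,2,2,4,1,1,2)$; your proof uses the convention of the example, which is the one under which the identity is true, e.g.\ $R(4,1,2)=2=a(1,2)-2a(1,1)+a(1,0)=5-4+1$). Your framework also recovers the paper's proven proposition $R(n,k)=a(1,n-k)-a(1,n-2k)$ as the first difference $B(1)-B(2)$, so your argument is the natural completion of the paper's own technique and, as far as I can verify, settles the conjecture.
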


\subsection{Pell numbers}

Recall that the sequence of \emph{Pell numbers} is $\{P(n)\}_{n \geq 0}$ where $P(0) = 0$, $P(1) = 1$, and $P(n) = 2P(n-1) + P(n-2)$ for $n \geq 2$.
The Pell numbers have many combinatorial interpretations, such as the sequence of denominators of the continued fraction for $\sqrt{2}$, and the number of $132$-avoiding two-stack-sortable permutations \cite{pell1}.
We prove the following proposition using a generating function argument, but in light of the combinatorial descriptions of both the Pell numbers and the functions $a_s(n,k)$, we invite the reader to provide a combinatorial proof.

\begin{prop}
	For all $n$,
	\[
		P(n) = \sum_{i \geq 0} a_{2i}(2i+1,n-4i)
	\]
\end{prop}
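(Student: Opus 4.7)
The plan is to mirror the generating-function arguments used throughout the paper: compute the $x$-generating function of the right-hand side, recognize a geometric series in disguise, and match it against the well-known Pell generating function $\sum_{n \geq 0} P(n) x^n = x/(1 - 2x - x^2)$ (or the appropriate shift thereof, depending on indexing conventions).

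First, I would swap the order of summation and write
\[
\sum_{n \geq 0} \left(\sum_{i \geq 0} a_{2i}(2i+1, n-4i)\right) x^n = \sum_{i \geq 0} x^{4i} \sum_{m \geq 0} a_{2i}(2i+1, m) x^m.
\]
Then I would apply the known generating function $\sum_{m \geq 0} a_s(r,m) x^m = (1-x)^{-s}\bigl((1-x)/(1-2x)\bigr)^{r+1}$ with $s = 2i$ and $r = 2i+1$, obtaining $x^{4i}(1-x)^2/(1-2x)^{2i+2}$ for the $i$-th inner generating function. This reveals the double sum as a geometric series in the ratio $x^4/(1-2x)^2$.

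Carrying out the geometric series and clearing denominators yields
\[
\frac{(1-x)^2}{(1-2x)^2 - x^4}.
\]
Here comes the cleanest step, which I expect to be where the proof really happens: the denominator factors as a difference of squares,
\[
(1-2x)^2 - x^4 = (1 - 2x - x^2)(1 - 2x + x^2) = (1 - 2x - x^2)(1-x)^2,
\]
so the expression collapses to $1/(1 - 2x - x^2)$, which is exactly the generating function of the Pell sequence (with the usual index shift accounting for $P(0) = 0$). Comparing coefficients finishes the proof.

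The only real obstacle is the difference-of-squares factorization $(1-2x)^2 - x^4 = (1-2x-x^2)(1-x)^2$; once that is spotted, the entire calculation simplifies in one line. Everything else is the routine generating-function bookkeeping that the paper uses repeatedly in earlier propositions.
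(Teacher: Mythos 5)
Your proof is correct and takes essentially the same route as the paper's: the paper also computes the generating function of the right-hand side, sums the geometric series with ratio $x^4/(1-2x)^2$ to obtain $(1-x)^2/\bigl((1-2x)^2-x^4\bigr)$, and uses exactly your difference-of-squares factorization $(1-2x)^2-x^4=(1-2x-x^2)(1-x)^2$ to collapse it to $1/(1-2x-x^2)$. Your parenthetical caution about the index shift is well placed, since $1/(1-2x-x^2)$ is the generating function of $P(n+1)$ under the stated convention $P(0)=0$, a discrepancy the paper's own proof silently glosses over.
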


\begin{proof}
	Using the recurrence for $P(n)$, is it straightforward to show that $P(n)$ has generating function
	\[
		\sum_{n \geq 0} P(n)x^n = \frac{1}{1-2x-x^2}.
	\]
	Let
	\[
		u = \frac{1-x}{1-2x}.
	\]
	Then the generating function of the right side of the desired identity is
	\[\begin{aligned}
		\sum_{n \geq 0} u^2 \left(\frac{ux^{2n}}{1-x}\right)^{2n} 
			&= \frac{(1-x)^2}{(1-2x)^2-x^4} \\
			&= \frac{1}{1-2x-x^2},
	\end{aligned}\]
	as desired.
\end{proof}

\subsection{Applications to palindromes}

In \cite{ChinnHeubach}, the authors found a number of generating functions and recurrence relations for palindromic quantities, and provided extensive data.
The $a(r,n)$ functions can be used to provide formulas for this data. 
First, let $m(r,n)$ denote the number of $(n+r)$-tilings when $r$ red squares combine palindromically with the palindromic white tile arrangements.

To illustrate, consider the four palindromes of $6$ without a central part: $33$, $1221$, $2112$, and $111111$.
Combining two red squares, which we will represent by $r$ in the palindromes, with the corresponding tilings of the palindromes, we result in twelve total palindromes:
\[
	\begin{array}{cc}
		3rr3 & r1221r \\
		r33r & r2112r \\
		12rr21 & 111rr111 \\
		1r22r1 & 11r11r11 \\
		21rr12 & 1r1111r1 \\
		2r11r2 & r111111r
	\end{array}
\]

Recall that a white tile of length $k$ can be represented by the positive integer $k$. 
The red square can be represented by any integer whose length is not duplicated by any of the white spaces. 

\begin{prop}
	For all $r$ and $n$,
	\[\begin{aligned}
		m(2r,2n) &= m(2r,2n+1) = a_1(r,\lfloor n/2\rfloor) \\
		m(2r+1,2n) &= a_0(r,n) \\
		m(2r,2n+1) &= 0
	\end{aligned}\]
\end{prop}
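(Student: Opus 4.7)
My plan is a direct structural analysis splitting into cases on the parities of $r$ and $n$. The central observation is that any palindromic $(n+r)$-tiling is uniquely determined by the pair consisting of (i) its \emph{central tile}, if any --- the unique tile whose interior contains the midpoint of the grid --- and (ii) an arbitrary two-toned tiling of the portion of the grid to the left of the midpoint, after which the right half is forced by reflection. The central tile, if present, has length of the same parity as $n+r$, and since a red square has length $1$, a red square can occupy the center only when $n+r$ is odd.

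First I would dispose of the vanishing case. When the number of reds is $2r+1$ and the white total is $2n+1$, the grid has even total length; the center therefore either lies on a tile boundary or inside an even-length tile, so it cannot be a red square, and consequently all $2r+1$ red squares must pair across the midpoint, which is impossible by parity. For $m(2r+1, 2n)$ the total length is odd, and the analogous parity argument forbids an odd-length \emph{white} central tile (it would leave an odd number of reds to pair up), so the center must be a single red square; the left half is then an arbitrary $(n+r)$-tiling with $r$ reds and white total $n$, contributing $a(r,n) = a_0(r,n)$.

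For the remaining two cases I would enumerate over the possible central white tile. In $m(2r,2n)$ the center is either a tile boundary or an even-length central white tile of length $2k$ with $1 \leq k \leq n$; both situations reduce to tiling the left half of length $r + n - k$ with $r$ reds and white total $n - k$, giving $\sum_{k=0}^{n} a(r, n-k) = \sum_{j=0}^{n} a(r,j) = a_1(r,n)$. The case $m(2r, 2n+1)$ is identical in spirit: a central red square is ruled out (it would leave $2r-1$ reds to pair), so the center is a white tile of odd length $2k+1$ with $0 \leq k \leq n$, and the same summation yields $a_1(r,n)$. The main obstacle, if it can be called one, is simply verifying the bijective nature of the ``central tile plus left half'' decomposition within each subcase --- ensuring that every palindromic tiling arises exactly once, with the correct ranges of $k$ and no overcounting or omitted configurations. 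All remaining steps are routine applications of the defining identity $a_1(r,n) = \sum_{j=0}^{n} a(r,j)$ from Section~2.
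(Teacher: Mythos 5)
Your proposal is correct and takes essentially the same route as the paper's own proof: a case analysis on the parities of the number of red squares and of the white total, decomposing each palindromic tiling into an optional central part plus a free two-toned tiling of the left half, and summing $\sum_{k=0}^{n} a(r,n-k) = a_1(r,n)$ in the even-red cases (your treatment is in fact more explicit than the paper's about the parity reasons a red square can or cannot occupy the center). Note that you have, appropriately, proved the statement with its typos corrected --- the first line should read $a_1(r,n)$ rather than $a_1(r,\lfloor n/2\rfloor)$ and the vanishing case is $m(2r+1,2n+1)=0$ rather than $m(2r,2n+1)=0$ --- which is exactly what the paper's proof (and its table of values) establishes.
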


\begin{proof}
	We will treat each case separately.
	First let $N = 2n$ and $R = 2r$.
	The palindrome compositions of $N$ are the compositions of $n-k$ paired with their mirror images and central part $2k$ (or no central part if $k=0$), for some $k = 0,\dots,n$. 
	This is counted by 
	\[
		a(r,n) + a(r,n-1) + \cdots + a(r,0) = a_1(r,n).
	\]
	So, $m(R,N) = a_1(r,n)$.
	
	Now suppose $N = 2n$ and $R = 2r+1$. 
	In this case, there is an unpaired red square. 
	It cannot be combined with an existing central part since that would disrupt the palindromicity.
	However, the square can serve as a lone central part. 
	Thus, the number of these is $a_0(r,n)$. 
	Putting this together with the case of $N=2n$ and $R=2r$, we obtain the first two inequalities in the statement of our result.
	
	If $N=2n+1$ and $R=2r$, then all of the palindromic compositions will have a central part. 
	Thus, counting each $a(r,l)$ $l = 0, \dots, n$, we end up with $a_1(r,n)$. 
	
	Finally, suppose $N=2n+1$ and $R=2r+1$.
	In this case, all of the palindromic compositions have a central part.
	Also, there must be unpaired one square.
	However, it is impossible for these to simultaneously exist, so no such compositions exist, completing the proof. 
\end{proof}

\begin{table}
\begin{tabular}{|c|c|c|c|c|c|c|c|c|c|c|} \hline
	\backslashbox{$r$}{$n$} & 0 & 1 & 2 & 3 & 4 & 5 & 6 & 7 & 8 & 9 \\ \hline
	0 & 1 & 1 & 2 & 2 & 4 & 4 & 8 & 8 & 16 & 16 \\ \hline
	1 & 1 & 0 & 1 & 0 & 2 & 0 & 4 & 0 & 8 & 0 \\ \hline
	2 & 1 & 1 & 3 & 3 & 8 & 8 & 20 & 20 & 48 & 48 \\ \hline
	3 & 1 & 0 & 2 & 0 & 5 & 0 & 12 & 0 & 28 & 0 \\ \hline
	4 & 1 & 1 & 4 & 4 & 13 & 13 & 38 & 38 & 104 & 104 \\ \hline
	5 & 1 & 0 & 3 & 0 & 9 & 0 & 25 & 0 & 66 & 0 \\ \hline
	6 & 1 & 1 & 5 & 5 & 19 & 19 & 63 & 63 & 192 & 192 \\ \hline
	7 & 1 & 0 & 4 & 0 & 14 & 0 & 44 & 0 & 129 & 0 \\ \hline
	8 & 1 & 1 & 6 & 6 & 26 & 26 & 96 & 96 & 321 & 321 \\ \hline
\end{tabular}
\caption{$m(r,n)$ for small choices of $r$ and $n$.}\label{tab: m(r,n)}
\end{table}

\begin{rmk}
	We again pause to mention connections with the existing literature.
	The sequence $\{m(3,2n)\}_{n \geq 0}$ appears to coincide with $C_n^{2,2}$ of \cite{heinhuang}, counting certain equivalences classes of objects enumerated by the Catalan numbers.
	Multiple rows of Table~\ref{tab: m(r,n)} suggest that they are the coefficients of $C_2(x)^k$ for various $k$, as described in \cite[Corollary3.16]{heinhuang}.
	We have seen $\{m(4,2n)\}_{n \geq 0}$ before, as $\{a_1(2,n)\}_{n \geq 0}$.
	Again, we invite the reader to establish bijections among the objects counted by these sequences and the Catalan-like objects found in \cite{heinhuang}.
\end{rmk}
\begin{defn}
	For nonnegative integers $n$, let $\Pal(n)$ denote the number of palindromic compositions of $n$ and let $\Pal(n,\widehat k)$ denote the number of palindromes of $n$ with no part $k$.
\end{defn}

It is not difficult to argue that $\Pal(2n) = \Pal(2n+1) = 2^n$, and observe that $a_1(0,n) = 2^n$.

\begin{thm}
	If $n$ and $k$ are nonnegative integers,
	\[
		\Pal(n,k) = \sum_{j \geq 0} (-1)^jm(j,n-2j).
	\]
	In particular, if $n$ and $k$ have the same parity, then
	\[
		\Pal(n,k) = \sum_{j \geq} (-1)^j(a_1(j,n-jk) - a(j,n-(j+1)k)),
	\]
	and if $n$ and $k$ have different parity, then 
	\[
		\Pal(n,k) = \sum_{j \geq 0} (-1)^j a_1(j,n-2k).
	\]
\end{thm}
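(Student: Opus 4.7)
The natural approach is inclusion–exclusion over distinguished appearances of the part $k$ in palindromic compositions of $n$. First I would reinterpret $m(j,n-jk)$ combinatorially: a tiling counted by $m(j,n-jk)$ consists of $j$ palindromically placed red squares together with a palindromic arrangement of white tiles of total length $n-jk$. Promoting each red square to a part of length $k$ yields a palindromic composition $\lambda$ of $n$ together with a \emph{palindromic subset}, of size $j$, of the set $K(\lambda)$ of positions already holding the value $k$. In other words, $m(j,n-jk)$ enumerates pairs $(\lambda,S)$ with $\lambda$ a palindromic composition of $n$ and $S$ a palindromic subset of $K(\lambda)$ with $|S|=j$.

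Next I would reorganize the alternating sum by the underlying palindromic composition:
\[
\sum_{j\ge 0}\epsilon_{j}\,m(j,n-jk)\;=\;\sum_{\lambda}\sum_{\substack{S\subseteq K(\lambda)\\ S\text{ palindromic}}}\epsilon_{|S|}.
\]
Each palindromic subset $S$ decomposes uniquely into a choice of $p$ palindromic pairs from $K(\lambda)$ and a binary choice $c\in\{0,1\}$ indicating whether the palindromic center (if $K(\lambda)$ has one) belongs to $S$, so $|S|=2p+c$. Choosing the sign to respect this pair/center decomposition makes the inner sum factor into independent $(1-1)$-type contributions, vanishing whenever $K(\lambda)\ne\varnothing$ and leaving $1$ when $K(\lambda)=\varnothing$. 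The total therefore collapses to the number of palindromic compositions of $n$ avoiding $k$, which establishes the master identity.

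Finally I would derive the two parity-refined formulas by substituting the preceding proposition's classification of $m(2r,\cdot)$ and $m(2r+1,\cdot)$. When $n$ and $k$ share a parity, $n-jk$ has constant parity in $j$, so both even-$j$ indices (contributing $a_1(j,\lfloor(n-jk)/2\rfloor)$ terms) and odd-$j$ indices (contributing $a(j,\lfloor(n-(j+1)k)/2\rfloor)$ terms) survive. When $n$ and $k$ have opposite parities, $n-jk$ is odd precisely when $j$ is odd, and $m(2r+1,\text{odd})=0$ kills every odd-$j$ term, collapsing the sum to a single series in $a_1$. The main obstacle is fixing the correct sign: a naive $(-1)^{|S|}$ does not suppress palindromes with a paired appearance of $k$, since a marked palindromic pair contributes with $(-1)^{2}=+1$; the sign must count pairs rather than individual marks, effectively replacing $j$ in the exponent by $\lceil j/2\rceil$. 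Once this is pinned down, matching the half-lengths $\lfloor(n-jk)/2\rfloor$ against the right-hand sides is routine.
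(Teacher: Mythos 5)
Your proposal is correct in substance but takes a genuinely different route from the paper's proof, and it also catches a real defect in the printed statement. The paper argues in the opposite direction: it proves the parity-refined formulas first, directly, by writing a palindrome of $n$ as a half-composition $(c_1,\dots,c_s)$ of $n' = \tfrac{1}{2}(n-\ell)$ together with a central part $\ell$, counting the halves that avoid $k$ by $C(n',\widehat k)$ (whose alternating $a(j,\cdot)$-expansion is Proposition~\ref{prop: 4-1}), summing over admissible $\ell$ to produce the $a_1$ terms, and subtracting the forbidden central part $\ell = k$ to produce the $a$ terms; the $m(\cdot,\cdot)$ identity is then read off as a consequence. You instead prove the $m$-identity first by inclusion--exclusion over palindromically marked occurrences of $k$, and only then specialize via the classification $m(2r,N) = a_1(r,\lfloor N/2\rfloor)$, $m(2r+1,2n') = a(r,n')$, $m(2r+1,2n'+1) = 0$. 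Your route buys something the paper's does not: it explains, and corrects, the sign. Since a palindromic marked set of size $j$ consists of $p$ mirror pairs and possibly a center, $j = 2p + c$, the weight that makes the inner sum collapse to $[K(\lambda) = \varnothing]$ is $(-1)^{p+c} = (-1)^{\lceil j/2 \rceil}$, exactly as you say, and the printed $(-1)^j$ is in fact false: for $n=4$, $k=2$ one has $\Pal(4,\widehat 2) = 2$, while $m(0,4) - m(1,2) + m(2,0) = 4 - 1 + 1 = 4$ and the corrected sum $m(0,4) - m(1,2) - m(2,0) = 2$ is right. Regrouping the surviving terms in pairs $j = 2i, 2i+1$, your sign reproduces the $(-1)^i\bigl(a_1(i,\cdot) - a(i,\cdot)\bigr)$ shape of the refined formulas, so the correction is consistent with the rest of the theorem, whose displays contain further typos (e.g. $m(j,n-2j)$ for $m(j,n-jk)$, and halved arguments suppressed).

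Two small slips in your parity bookkeeping do not affect the conclusion but should be fixed in a write-up: when $n$ and $k$ are both odd, $n - jk$ does \emph{not} have constant parity in $j$ (it alternates with $j$), and when $n$ is odd and $k$ even, $n - jk$ is always odd rather than ``odd precisely when $j$ is odd.'' The statements you actually need are that for odd $j$ one has $n - jk \equiv n - k \pmod 2$, so the odd-$j$ terms vanish exactly when $n$ and $k$ have opposite parity (via $m(\text{odd},\text{odd}) = 0$), while in the same-parity cases the even-$j$ and odd-$j$ terms convert to $a_1$ and $a$ terms respectively; with those corrections your plan goes through as stated.
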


\begin{proof}
	First consider the case where $n$ and $k$ are both even, say $k = 2j$ for some integer $j$.
	Any palindromic composition of $n$ must then have either no central part (i.e. a central part of $0$) or an even central part.
	That is, a palindromic composition of $n$ is of the form
	\[
		(c_1,c_2,\dots,c_s,\ell, c_s,\dots,c_2,c_1)
	\]
	where $\ell$ is a nonnegative even integer and $(c_1,\dots,c_s)$ is a composition of $n' = \frac{1}{2}(n - \ell)$ with no parts $k$.
	The number of such compositions is $c(n',\widehat{k})$.
	By ranging over all possible $\ell$, and then subtracting the instance where $\ell = k$, we obtain the desired formula. 
	The formula involving $m(\cdot,\cdot)$ follows as a consequence.
	
	The remaining cases (where $n$ and $k$ both odd or have different parity) follow similarly and therefore their details are omitted.
\end{proof}

The above result has the following immediate corollary.

\begin{cor}
	The number of palindromic compositions of $n$ having at least one part $k$ is
	\[
		\Pal(n) - \Pal(n,\widehat k) = \sum_{j \geq 1} (-1)^{j-1}(m(2j-1, n-(2j-1)k) + m(2j,2jk)).
	\]
\end{cor}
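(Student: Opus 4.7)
The plan is to deduce the corollary directly from the preceding theorem by peeling off the $j=0$ term of the alternating sum and then regrouping the remaining terms by the parity of the summation index. The quantity $\Pal(n) - \Pal(n,\widehat k)$ counts, by inclusion-exclusion, exactly the palindromic compositions of $n$ that use at least one part equal to $k$, so it suffices to massage the theorem's formula into the claimed form.

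First I would observe that $m(0,n) = \Pal(n)$: a palindromic $(n+0)$-tiling uses no red squares, and hence reduces to an ordinary palindromic composition of $n$. Substituting the theorem's expression for $\Pal(n,\widehat k)$ and cancelling the $j=0$ term against $\Pal(n)$ therefore yields
\[
\Pal(n) - \Pal(n,\widehat k) \;=\; -\sum_{j \geq 1}(-1)^{j}\, m(j,\,n-jk) \;=\; \sum_{j \geq 1}(-1)^{j-1}\, m(j,\,n-jk).
\]

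Next I would split this series into its odd-index and even-index subseries, writing $j = 2i-1$ and $j = 2i$ for $i \geq 1$ respectively, and collect the two contributions under a single outer summation index $i$. Since $(-1)^{(2i-1)-1} = (-1)^{(2i)-1} \cdot (-1)$ differ in sign only by a factor tracking the parity of $j$, regrouping pairs of consecutive terms under a common sign $(-1)^{i-1}$ produces the displayed paired form in the corollary, with the red-square count $j$ becoming $2i-1$ or $2i$ and the argument $n - jk$ specializing accordingly.

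The principal obstacle is purely bookkeeping: one must carefully verify that the two parity classes line up with the arguments $n-(2j-1)k$ and $n-2jk$ written in the corollary, and confirm that $m(0,n) = \Pal(n)$ is the correct base identification so that the cancellation with $\Pal(n)$ is clean. Once these are checked, the remainder is a routine reindexing, which is why the authors label the statement an immediate corollary.
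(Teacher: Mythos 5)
There is a genuine sign error at the heart of your argument. Your intermediate identity
\[
	\Pal(n) - \Pal(n,\widehat k) = \sum_{j \geq 1} (-1)^{j-1}\, m(j,\,n-jk)
\]
is false, and the subsequent ``regrouping'' step cannot repair it: in a strictly alternating series the terms $m(2i-1,\cdot)$ and $m(2i,\cdot)$ carry \emph{opposite} signs, so they can never be collected under the common sign $(-1)^{i-1}$ that the corollary displays --- doing so changes the value of the sum and is not a reindexing. Concretely, take $n=2$, $k=1$: the left side is $1$ (only the palindrome $(1,1)$ contains a part $1$), while your sum gives $m(1,1) - m(2,0) = 0 - 1 = -1$; the corollary's paired form gives $m(1,1) + m(2,0) = 0 + 1 = 1$, which is correct. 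Similarly, for $n=4$, $k=2$ the answer is $2$ (the palindromes $(2,2)$ and $(1,2,1)$), matching $m(1,2)+m(2,0) = 1+1 = 2$, whereas your alternating sum gives $m(1,2)-m(2,0) = 0$.

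The idea you are missing is that inclusion--exclusion in the palindromic setting does not toggle individual occurrences of $k$: red squares standing for parts $k$ must be inserted \emph{symmetrically}, so the objects being toggled are symmetric units --- a mirror pair of red squares, or a single central red square. A configuration with $j$ red squares contains $\lceil j/2 \rceil$ such units and therefore carries sign $(-1)^{\lceil j/2 \rceil}$, not $(-1)^j$; the correct reading of the preceding theorem (whose printed form is typo-laden, as is the corollary's $m(2j,2jk)$, which should be $m(2j,\,n-2jk)$) is
\[
	\Pal(n,\widehat k) = \sum_{j \geq 0} (-1)^{\lceil j/2 \rceil}\, m(j,\,n-jk),
\]
with sign pattern $+,-,-,+,+,-,-,\dots$. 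Your opening observation $m(0,n) = \Pal(n)$ is correct, and once the signs are fixed, subtracting from $m(0,n)$ does make the terms $j = 2i-1$ and $j = 2i$ share the common sign $(-1)^{i-1}$, yielding the corollary exactly in the ``immediate'' way the paper intends. So the skeleton of your plan --- peel off $j=0$, then pair terms by parity of $j$ --- is right; the missing ingredient is the units-based sign $(-1)^{\lceil j/2 \rceil}$, without which the pairing is impossible.
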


\section*{Acknowledgements}

The authors would like to thank George Andrews and Arthur Benjamin for their helpful comments on various parts of initial drafts of this work.
We also thank the anonymous referee for bringing our attention to an assortment of connections within the existing literature.

\bibliographystyle{plain}

\end{document}